\crefname{equation}{equation}{}
\Crefname{equation}{Equation}{}% For beginning \Cref
\newif\ifcomment
\newcommand{\pemp}{\ensuremath{\hat{P}_N}\xspace} % empirical dist
\newcommand{\hnorm}[1]{\|{#1}\|_{\mathcal H}}
\newcommand{\rkhs}{{\mathcal H}\xspace}
\newcommand{\CVaR}{\operatorname{CVaR}}
\newcommand{\MMD}{\operatorname{MMD}}
\newtheorem{proposition}{Proposition}[section]
\newtheorem{theorem}{Theorem}[section]
\newtheorem{lemma}[proposition]{Lemma}
\newtheorem*{remark}{Remark}
\newcommand{\gh}{\hat{g}}
\newcommand{\xh}{\hat{x}}
\newcommand{\E}{\mathcal E_N}
\newcommand{\Etrue}{\mathbb{E}_{P_0}}
\newcommand{\Ehat}{\mathbb{E}_{\hat{P}_N}}
\newcommand{\oneOverRootN}{\mathcal{O}(\frac{1}{\sqrt{N}})\xspace}
\newcommand{\confidTerm}{M\sqrt{\frac{2\log(1/\delta)}{N}}\xspace}
\newtheorem{corollary}{Corollary}[theorem]
\title{Maximum Mean Discrepancy Distributionally Robust\\
Nonlinear Chance-Constrained Optimization with Finite-Sample Guarantee}
\author{%
  \fontsize{8.7}{8.7}\selectfont Yassine Nemmour\textsuperscript{1,*}~~~Heiner Kremer\textsuperscript{1,*}~~~Bernhard Sch\"olkopf\textsuperscript{1}~~~Jia-Jie Zhu\textsuperscript{2}\\
  \fontsize{8.3}{8.3}\selectfont \!\!\!\textsuperscript{1}MPI for Intelligent Systems, Tübingen~~~\textsuperscript{2}Weierstrass Institute for Applied Analysis and Stochastics, Berlin\\
  \fontsize{8.3}{8.3}\selectfont \!\!\!\textsuperscript{*}Equal Contribution~~~\textsuperscript{\Letter}\{\texttt{ynemmour@tuebingen.mpg.de}\}\\
}
\begin{document}

\maketitle

\begin{abstract}
This paper is motivated by addressing open questions in distributionally robust chance-constrained programs (DRCCP) using the popular Wasserstein ambiguity sets.
Specifically, the computational techniques for those programs typically place restrictive assumptions on the constraint functions
and the size of the Wasserstein ambiguity sets is often set using costly cross-validation (CV) procedures or conservative measure concentration bounds.
In contrast, we propose a practical DRCCP algorithm using kernel \emph{maximum mean discrepancy} (MMD) ambiguity sets, which we term MMD-DRCCP, to treat general nonlinear constraints without using ad-hoc reformulation techniques. 
MMD-DRCCP can handle general nonlinear and non-convex constraints with a proven finite-sample constraint satisfaction guarantee
of a dimension-independent $\oneOverRootN$ rate, achievable by a practical algorithm.
We further propose an efficient bootstrap scheme for constructing sharp MMD ambiguity sets in practice without resorting to CV.
Our algorithm is validated numerically on a portfolio optimization problem and a tube-based distributionally robust model predictive control problem with non-convex constraints.
% \jz{don't say "under review" if we want to submit a full version to a journal.}
\end{abstract}
\section{Introduction}
Chance-constrained programs (CCP) frame optimization problems under uncertainty using soft probabilistic constraints. Compared to their robust optimization counterparts, CCP solutions generally satisfy the constraints with high probability in practice while maintaining performance.
Computing an exact solution of a CCP requires knowledge of the underlying probability distribution, which is generally unavailable or uncertain.
For this reason, CCP has recently been combined with distributionally robust optimization~\cite{delageDistributionallyRobustOptimization2010}~(DRO) using Wasserstein ambiguity sets \cite{MohajerinEsfahani2018,gaoDistributionallyRobustStochastic2016}, as proposed by works such as \cite{Weijun2018,hotaDataDrivenChanceConstrained2018}.
DRO addresses uncertain optimization problems by finding a solution for the worst-case distribution from a set of distributions, the ambiguity set.
In this paper, we will focus on ambiguity sets constructed using the kernel maximum mean discrepancy.
Enabled by recent advances in applied mathematics, particularly in variational analysis and probability theory, the (Kantorovich-)Wasserstein distances have become a useful class of probability metrics used in operations research, computer graphics, machine learning, and numerous scientific fields.
We refer the interested reader to \cite{santambrogioOptimalTransportApplied2015} for the theory of optimal transport.
Within the Wasserstein ambiguity set DRCCP framework, with the notable exceptions of \cite{hotaDataDrivenChanceConstrained2018,guDistributionallyRobustChanceConstrained2021}, most works limit the class of constraint functions to be affine
in the uncertainty variable \cite{Weijun2018, chenDataDrivenChanceConstrained, ho-nguyenDistributionallyRobustChanceConstrained2020}.
This is a severe limitation for applying their methods in practice.
For convenience, we give an overview of DRCCP works in Table~\ref{tab:drccp_comp} to highlight the gap that this paper fills in terms of handling nonlinear constraints.
In addition, many works rely on cross-validation (CV) procedures to set the size of the ambiguity set.
CV is computationally prohibitive and thus seldomly used in large-scale tasks, such as deep learning and tasks involving complex simulations.

\begin{table}[!t]
    \centering
    \begin{tabular}{c|c}
        Constraint function w.r.t. $\xi$ & Approach 
        \\
        \hline
         Affine & MIP: \cite{Weijun2018, ho-nguyenDistributionallyRobustChanceConstrained2020, chenDataDrivenChanceConstrained, jiDataDrivenDistributionallyRobust}, CVaR: \cite{Weijun2018, hotaDataDrivenChanceConstrained2018, guDistributionallyRobustChanceConstrained2021} 
         \\
         Quadratic & CVaR: \cite{guDistributionallyRobustChanceConstrained2021} 
         \\
         Concave & Cutting-plane algorithm: \cite{hotaDataDrivenChanceConstrained2018} 
         \\
         \makecell{Convex (known Lipschitz \\constant for every $x$)} & Convex inner approximation: \cite{hotaDataDrivenChanceConstrained2018}
         \\
         General nonlinear & \textbf{This work}
    \end{tabular}
    \caption{Comparison of recent DRCCP works using Wasserstein ambiguity sets and our work.
    }
    \label{tab:drccp_comp}
\end{table}

In order to address those open questions, this paper proposes DRCCP with general nonlinear constraints based on the maximum mean discrepancy (MMD) ambiguity sets introduced shortly, which we term (MMD-DRCCP). Different from DRCCP based on Wasserstein distances, our methodology does not rely on ad-hoc reformulation techniques that are highly dependent on the constraint function classes and their specific closed-form support functions, if they exist.
Instead, we use Hilbert spaces generated using expressive kernels as universal function approximators to treat general nonlinear constraints with a single unified reformulation technique.

\noindent
\textbf{Contributions and results overview:}
\begin{enumerate}
    \item We propose the MMD-DRCCP in Section~\ref{sec:main}.
    Following that, we derive (a) an exact reformulation of the MMD-DRCCP employing the recent advances in kernel methods for robust machine learning \cite{zhuKernelDistributionallyRobust2020} and (b) a convexity-preserving conditional Value-at-Risk (CVaR) approximation~\eqref{eq:approx_sem_inf} that can treat general nonlinear constraints.
    \item We show that, in contrast to Wasserstein ambiguity sets, MMD ambiguity sets can be constructed in a simple and efficient way using a bootstrap procedure (Algorithm~\ref{alg:bootstrap}) or using a computable estimation error bound.
    \item 
    To justify our approximation scheme~\eqref{eq:approx_sem_inf},
    we then give a finite-sample constraint satisfaction guarantee for general nonlinear DRCCPs in Section~\ref{sec:guarantee}.
    Informally, it certifies that, despite not using the so-called exact reformulation of DRCCP, our finite-sample solution satisfies the population version of the distributionally robust CVaR constraints approximately with an error of order $\oneOverRootN$ independent of the problem dimensions.
    \item In Section~\ref{sec:num_exp}, we validate our methods using numerical examples in operations research and control problems.
\end{enumerate}

\section{MMD-DRCCP}
\label{sec:main}
In this section, we introduce DRCCPs with MMD ambiguity sets.
We discuss how, in contrast to Wasserstein ambiguity sets, we can construct MMD ambiguity sets in a principled way using a practical bootstrap approach. We derive an exact reformulation of our problem based on the strong duality result of \cite{zhuKernelDistributionallyRobust2020} and provide a tractable CVaR relaxation. 
Before formally introducing the problem, we provide necessary background material on reproducing kernel Hilbert spaces.

%%%%%%%%%%%%%%%%%%%%%%%%%%%%%%%%%%%%%%%%%%%%%%%%%%%%%%%%%%%%%%%%%%%%%%%%%%%%%
%%%%%%%%%%%%%%%%%%%%%%%%%%%%%%%%%%%%%%%%%%%%%%%%%%%%%%%%%%%%%%%%%%%%%%%%%%%%%
\subsection{Reproducing Kernel Hilbert Spaces}
\label{subsec:rkhs}
A kernel is a similarity measure defined by a symmetric function $k: \mathcal X \times \mathcal X \rightarrow \mathbb R$ and said to be positive definite (PD) if $\sum_{i=1}^n \sum_{j=1}^n a_j a_i k(x_i, x_j) \geq 0$ for any $n\in \mathbb N$, $\{x_i\}_{i=1}^n \subset \mathcal X$ and $\{a_i\}_{i=1}^n \subset \mathbb R$. For every PD kernel there exists a feature map $\phi : \mathcal X \rightarrow \mathcal H$ taking values in a reproducing kernel Hilbert space (RKHS) $\mathcal H$ such that $k(x, y) = \langle \phi(x), \phi(y)\rangle_{\mathcal H}$, where $\langle \cdot, \cdot \rangle_\mathcal{H}$ denotes an inner product on $\mathcal H$. 
The inner product induces a norm via $||f||_\mathcal{H} \coloneqq \sqrt{\langle f, f\rangle_\mathcal{H}}$.
For a given distribution $P$ one denotes the kernel mean embedding (KME) as $\mu_P \coloneqq \int k(x, \cdot) dP$. Equipped with these tools, one can introduce a metric between two distributions $P$ and $Q$ as $||\mu_P - \mu_Q||_\mathcal{H}$, known as the maximum mean discrepancy (MMD) \cite{grettonKernelTwoSampleTest2012}. The reproducing property of an RKHS enables one to rewrite this metric as
$\MMD(P, Q) = \mathbb E_{x, x' \sim P} k(x, x') + \mathbb E_{y, y' \sim Q}k(y, y') - 2\mathbb E_{x\sim P, y\sim Q}k(x, y)$.
This closed-form expression is a definitive advantage of MMD since computing Wasserstein distances is intractable in general.
Notably, the MMD and the 1-Wasserstein distance both belong to the integral probability metric family, while an entropic regularized optimal transport metric can also be interpreted as an MMD\cite{feydyInterpolatingOptimalTransport2018}.
We refer to standard texts \cite{berlinetReproducingKernelHilbert2011,scholkopf2002learning,wendlandScatteredDataApproximation2004,smola2007hilbert,steinwartSupportVectorMachines2008} for comprehensive introductions to kernel methods.

%%%%%%%%%%%%%%%%%%%%%%%%%%%%%%%%%%%%%%%%%%%%%%%%%%%%%%%%%%%%%%%%%%%%%%%%%%%%%
%%%%%%%%%%%%%%%%%%%%%%%%%%%%%%%%%%%%%%%%%%%%%%%%%%%%%%%%%%%%%%%%%%%%%%%%%%%%%
\subsection{Chance constraint programs with MMD ambiguity sets}
\label{subsec:prob}
Next, we formally introduce CCPs and DRCCPs.
For simplicity, we restrict our attention to scalar-valued functions in this paper.
Let $f: \mathcal{X} \subset \mathbb{R}^n \times \Xi \subseteq \mathbb R^m \rightarrow \mathbb{R}$ denote a function that defines an uncertain inequality constraint $f(x, \xi) \leq 0$ depending on a random variable $\xi \in \Xi$.
In particular,
we do not exclude the possibility that $f$ may be nonlinear, non-convex, or semi-continuous.
We consider the linear cost $c^Tx$, with $c \in \mathbb R^n$, without loss of generality. A CCP with risk level $\alpha \geq 0$ is then defined as
\begin{equation}
\label{eq:ccp}
\begin{alignedat}{1}
    & \min_{x \in \mathcal{X}} c^T x\\
    & \text{subject to }P_0 [f(x, \xi) \leq 0] \geq 1 - \alpha.
\end{alignedat}
\end{equation}
This has the interpretation that the inequality constraints can be violated with probability at most $\alpha$.
Since generally the underlying data distribution $P_0$ is unknown and one often only has access to a sample from it, we expand this formulation to its distributionally robust counterpart. 
We consider a worst-case distribution within a set of plausible distributions, the so-called ambiguity set, and define the DRCCP as
\begin{equation}
    \label{eq:drcc}
\begin{alignedat}{1}
    & \min_{x \in \mathcal{X}} c^T x\\
    & \text{subject to }    \inf_{P \in \mathcal P} P [f(x, \xi) \leq 0] \geq 1 - \alpha.
\end{alignedat}
\end{equation}
We construct a maximum mean discrepancy (MMD)-based ambiguity set as a ball of radius $\varepsilon$ centered at the empirical distribution $\pemp$. Given samples $\{\xi\}_{i=1}^N$ of the true distribution $P_0$, the empirical distribution is then given by $\pemp = \sum_{i=1}^N \delta_{\xi_i}$.
As the sample size goes to infinity, the empirical distribution $\pemp$ of the sample converges to the true distribution by the weak law of large numbers. Thus, the radius $\varepsilon$ should be chosen in a data-driven way that reflects the confidence according to the sample size $N$.
In the rest of the paper, we denote the MMD ambiguity set by
\begin{equation}
\label{eq:amb_set}
\mathcal P \coloneqq \{P : \MMD(P, \pemp) \leq \varepsilon\}.
\end{equation}
If $\varepsilon$ is chosen large enough such that the true distribution is contained in $\mathcal{P}$, then the solution of the DRCCP \eqref{eq:drcc} will also satisfy the constraint of the original CCP \eqref{eq:ccp}.
Notably, MMD ambiguity sets enable us to set the ambiguity radius a priori in a few simple-yet-principled ways not available for the Wasserstein counterpart.
Using the estimation error bound of MMD estimators \cite{Tolstikhin2017,grettonKernelTwoSampleTest2012}, we have that, with probability $1-\delta$, the population distribution $P_0$ is contained in an MMD ball around the empirical distribution
\begin{equation}
    \label{eq:rate}
    \MMD(P_0, \pemp) \leq \sqrt{\frac{C}{N}} + \sqrt{\frac{2C\log(1/\delta)}{N}},
\end{equation}
where $C$ is a constant such that $\sup_x k(x, x) \leq C \leq \infty$. For the common Gaussian kernel, $C=1$. Note that \eqref{eq:rate} is dimension-free.
With this result, we can simply set the radius of the MMD ambiguity set to the RHS of \eqref{eq:rate}.
In practice, however, concentration bounds such as \eqref{eq:rate} are overly conservative \cite{grettonKernelTwoSampleTest2012}.
We now propose an MMD bootstrap scheme to obtain tighter confidence intervals, which is not available for Wasserstein distances.

%%%%%%%%%%%%%%%%%%%%%%%%%%%%%%%%%%%%%%%%%%%%%%%%%%%%%%%%%%%%%%%%%%%%%%%%%%%%%
%%%%%%%%%%%%%%%%%%%%%%%%%%%%%%%%%%%%%%%%%%%%%%%%%%%%%%%%%%%%%%%%%%%%%%%%%%%%%
\subsection{Bootstrap construction of MMD ambiguity sets}
\begin{figure}[t!]
    \centering
    \includegraphics[scale=0.85]{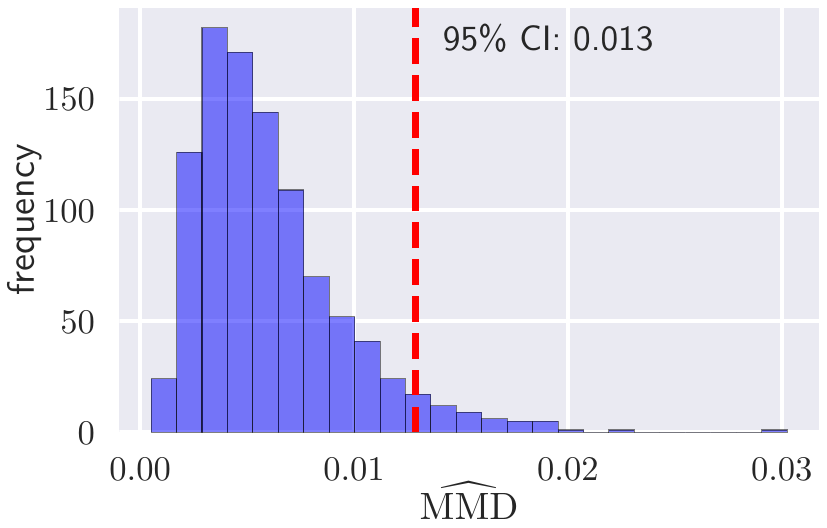}
    \caption{Bootstrap construction of the MMD ambiguity set. We exemplarily sample $N=100$ points from a standard normal distribution and compute bootstrap estimates of $\operatorname{MMD}(P_0,\pemp)$ over $B=1000$ bootstrap samples using Algorithm~\ref{alg:bootstrap}. We set the radius of the ambiguity set to the $\beta = 95\%$ confidence bound $\varepsilon = 0.013$. For comparison, using instead $10000$ additional samples from $P_0$ to estimate $\operatorname{MMD}(P_0,\pemp)$ yields $\varepsilon = 0.010$. We conclude that in this case with high probability the true distribution is contained in our bootstrap MMD ambiguity set.}
    \label{fig:bootstrap}
\end{figure}
We propose to construct the bootstrap MMD ambiguity set in a similar fashion as for the two-sample test in \cite{grettonKernelTwoSampleTest2012}, based on the results of \cite{bootstrap-CI} for general degenerate V-statistics. 
Let $\{ \tilde{\xi}_i \}_{i=1}^N$ denote a bootstrap sample of $\pemp$ with distribution $\tilde{P}$, i.e., drawn with replacement from $\{ \xi_i \}_{i=1}^N$.
Define the (biased) MMD estimator as $\widehat{\operatorname{MMD}}(\tilde{P},\hat{P}_N) = \sum_{i,j=1}^N k(\xi_i,\xi_j) + k(\tilde{\xi}_i, \tilde{\xi}_j) - 2 k(\xi_i, \tilde{\xi}_j)$. Then by the weak law of large numbers and the bootstrap result for V-statistics of \cite{bootstrap-CI} we have that $\widehat{\operatorname{MMD}}(\tilde{P},\pemp) \overset{d}{\rightarrow} \operatorname{MMD}(P_0,\pemp)$ as $N \rightarrow \infty$. 
For a fixed confidence level $\beta$, this lets us determine the radius $\varepsilon$ of the uncertainty set as the $\beta$-quantile of the bootstrap distribution $\widehat{\operatorname{MMD}}(\tilde{P}, \hat{P}_N)$ (see Figure~\ref{fig:bootstrap}). Details on the procedure can be found in Algorithm~\ref{alg:bootstrap}.
We emphasize that bootstrap techniques for MMD have been used in large-scale machine learning tasks for high-dimensional data and are not available for Wasserstein ambiguity sets due to the lack of closed-form estimators.

\begin{algorithm}[h!]
\caption{Bootstrap MMD ambiguity set}
\label{alg:bootstrap}
\KwData{Sample $\{\xi\}_{i=1}^N$, Number of bootstrap samples $B$, Confidence level $\beta$}
\KwResult{Radius of MMD ambiguity set $\varepsilon$}
$K \gets kernel(\xi, \xi)$\; % \Comment{Compute Kernel Gram matrix K}\\
\For{$m = 1,\ldots , B$}{
  Draw a set $I$ of $N$ numbers from $\{1,\ldots,N\}$ with replacement\;
  $K_x \gets \sum_{i,j=1}^N K_{ij}$;  \ \
  $K_y \gets \sum_{i,j \in I} K_{ij}$\;
  $K_{xy} \gets \sum_{j \in I} \sum_{i=1}^N K_{ij} $\;
  $\text{MMD}[m] \gets \frac{1}{N^2} (K_x + K_y - 2 K_{xy})$\;
}
$\text{MMD} \gets sort(\text{MMD})$\;
$\varepsilon \gets \text{MMD}[ceil(B \beta)]$\;
\end{algorithm}

%%%%%%%%%%%%%%%%%%%%%%%%%%%%%%%%%%%%%%%%%%%%%%%%%%%%%%%%%%%%%%%%%%%%%%%%%%%%%
%%%%%%%%%%%%%%%%%%%%%%%%%%%%%%%%%%%%%%%%%%%%%%%%%%%%%%%%%%%%%%%%%%%%%%%%%%%%%
\subsection{Exact reformulation}
\label{subsec:exact}
Given the MMD ambiguity set $\mathcal{P}$, we denote the feasible region $Z$ of the DRCCP \eqref{eq:drcc} as
\begin{equation}
    \label{eq:set_z}
    Z \coloneqq \big\{ x \in \mathcal{X}: \inf_{P \in \mathcal P} P \{ f(x, \xi) \leq 0\} \geq 1 - \alpha \big\},
\end{equation}
where $\mathcal P $ is the MMD ambiguity set defined in \eqref{eq:amb_set}.
Since for MMD ambiguity sets, the infimum in \eqref{eq:set_z} is challenging to compute, we first summon the strong duality result proved in \cite{zhuKernelDistributionallyRobust2020} to embed the chance constraint into an RKHS. With the dual form of the constraint, the DRCCP becomes a kernel machine learning problem in finding an RKHS function that majorizes $1(f(x,\xi) \leq 0)$, where $1$ denotes the indicator function. We visualize this idea in Figure~\ref{fig:majorization} and formalize it in the following Proposition.
\begin{proposition}
\label{thm:exact}
We consider the feasible set $Z$ in \eqref{eq:set_z} with the uncertain nonlinear constraint $f(x, \xi)\leq 0$, the risk-level $\alpha$ and the RKHS function $g(\xi) \in \mathcal H$. The decision variable is denoted by $x$ and $\xi$ is the uncertain variable.
Then the feasible set $Z$ can be reformulated as 
\begin{subequations}
    \label{eq:exact_ref}
    \begin{empheq}[left=Z \coloneqq \left\{ x \in \mathcal{X} :, right=\right\} ]{align}
        &g_0 + \frac{1}{N} \sum_{i=0}^N g(\xi_i) + \varepsilon ||g||_{\mathcal H} \leq \alpha \label{eq:cc_risk}\\
        &\mathbb 1(f(x, \xi) > 0) \leq g(\xi) + g_0 \, \ \  \forall \xi \in \Xi \label{eq:characteristic}\\
        &g \in \mathcal H, \  g_0 \in \mathbb R
    \end{empheq}
\end{subequations}
\end{proposition}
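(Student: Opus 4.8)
The plan is to dualize the distributionally robust chance constraint defining $Z$ in two steps. First I would pass to the complement: for a fixed decision $x$, the constraint $\inf_{P\in\mathcal P}P\{f(x,\xi)\le 0\}\ge 1-\alpha$ is equivalent to $\sup_{P\in\mathcal P}P\{f(x,\xi)>0\}\le\alpha$, i.e.\ to $\sup_{P\in\mathcal P}\mathbb{E}_P[\ell_x]\le\alpha$ for the bounded loss $\ell_x(\xi):=\mathbb 1(f(x,\xi)>0)\in[0,1]$. Thus $Z$ is exactly the set of $x$ for which a worst-case expectation over the MMD ambiguity set \eqref{eq:amb_set} does not exceed $\alpha$, which is the form handled by kernel DRO.

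Next I would invoke strong duality. The quantity $\sup_{P:\MMD(P,\pemp)\le\varepsilon}\mathbb{E}_P[\ell_x]$ is precisely a kernel DRO worst-case risk, so by \cite{zhuKernelDistributionallyRobust2020} it equals
\begin{equation*}
\inf_{g_0\in\mathbb R,\; g\in\mathcal H}\Big\{\, g_0+\frac{1}{N}\sum_{i=1}^N g(\xi_i)+\varepsilon\|g\|_{\mathcal H}\ :\ g_0+g(\xi)\ge \ell_x(\xi)\ \ \forall\,\xi\in\Xi \,\Big\},
\end{equation*}
where I use the reproducing property $\langle g,\phi(\xi)\rangle_{\mathcal H}=g(\xi)$, the identity $\int g\,dP=\langle g,\mu_P\rangle_{\mathcal H}$ for the KME, and the fact that $\varepsilon\|g\|_{\mathcal H}$ is the support function of the RKHS ball of radius $\varepsilon$ that bounds $\langle g,\mu_P-\mu_{\pemp}\rangle_{\mathcal H}$ by Cauchy--Schwarz. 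Hence $x\in Z$ iff this infimum is $\le\alpha$. The dual feasibility constraint $g_0+g(\xi)\ge\ell_x(\xi)$ is exactly the majorization requirement \eqref{eq:characteristic} — an RKHS function plus offset dominating the indicator $\mathbb 1(f(x,\cdot)>0)$, as visualized in Figure~\ref{fig:majorization} — and the dual objective being $\le\alpha$ is \eqref{eq:cc_risk}.

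It remains to convert ``the infimum is $\le\alpha$'' into ``there exist $(g_0,g)$ satisfying \eqref{eq:cc_risk} and \eqref{eq:characteristic}''. One inclusion is weak duality: if $\ell_x\le g_0+g$ pointwise then for every $P\in\mathcal P$, $\mathbb{E}_P[\ell_x]\le g_0+\langle g,\mu_P\rangle_{\mathcal H}\le g_0+\frac1N\sum_i g(\xi_i)+\varepsilon\|g\|_{\mathcal H}\le\alpha$, so such a pair certifies $x\in Z$. For the converse I would show the dual infimum is attained: eliminating $g_0$ by its smallest feasible value $g_0(g)=\sup_\xi(\ell_x(\xi)-g(\xi))$, the reduced objective in $g$ is weakly lower semicontinuous and, since $\ell_x\ge0$ forces it to dominate $\varepsilon\|g\|_{\mathcal H}$, coercive; hence a minimizer exists and realizes a value $\le\alpha$, yielding a pair $(g_0,g)$ satisfying the three conditions of \eqref{eq:exact_ref}.

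The main obstacle is the strong-duality step itself: $\ell_x=\mathbb 1(f(x,\cdot)>0)$ is discontinuous — only lower semicontinuous when $f$ is continuous, since $\{f(x,\cdot)>0\}$ is open — so one must verify the hypotheses under which \cite{zhuKernelDistributionallyRobust2020} guarantees a zero duality gap. This typically requires a Slater-type condition, which $\varepsilon>0$ supplies because $\pemp$ then lies in the interior of the ambiguity set, together with standard regularity (e.g.\ compact $\Xi$ and a bounded continuous kernel $k$). This very discontinuity is what later motivates replacing \eqref{eq:characteristic} by the convexity-preserving CVaR surrogate \eqref{eq:approx_sem_inf}.
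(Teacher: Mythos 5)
Your proposal follows essentially the same route as the paper's proof: pass to the complement $\sup_{P\in\mathcal P}P\{f(x,\xi)>0\}\le\alpha$, write the probability as the expectation of the indicator $\mathbb 1(f(x,\cdot)>0)$, and apply the strong duality theorem of \cite{zhuKernelDistributionallyRobust2020} to obtain the majorization constraint \eqref{eq:characteristic} and the dual objective bound \eqref{eq:cc_risk}. Your added care about converting ``infimum $\le\alpha$'' into the existence of a feasible pair $(g_0,g)$ via weak lower semicontinuity and coercivity, and about the lower semicontinuity of the indicator in the duality hypotheses, goes beyond the paper's one-line appeal to the cited theorem but does not change the argument.
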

\begin{proof}
Note that we can replace $\inf_{P \in \mathcal P} P [f(x, \xi) \leq 0] \geq 1 - \alpha$ with the equivalent $\sup_{P \in \mathcal P} P [f(x, \xi) > 0] \leq \alpha$.
Later, we will introduce an inner approximation scheme based on this result.

Next, we can rewrite the probability with an indicator function
$$
P [f(x, \xi) > 0] = \mathbb E_P [\mathbb 1\big(f(x, \xi) > 0\big)].
$$
The indicator function fulfills the assumptions on the constraint function of the strong duality result of \cite{zhuKernelDistributionallyRobust2020}, with the Slater condition trivially satisfied.
Using their Theorem 3.1 we can rewrite $\sup_{P \in \mathcal P} \mathbb E_P [\mathbb 1\big(f(x, \xi) > 0\big)]$ as
\begin{alignat}{2}
& \min_{g \in \mathbb{R}, g\in \mathcal H} \quad && g_0 + \frac{1}{N}\sum_{i=0}^N g(\xi_i) + \varepsilon ||g||_{\mathcal H}\\
& \text{subject to} \quad && \mathbb 1\big(f(x, \xi) > 0\big) \leq g_0 + g(\xi) \quad \forall \xi \in \Xi.
\end{alignat}
The result follows by plugging this expression into \eqref{eq:set_z}.
\end{proof}
Solving MMD-DRCCP using this exact reformulation is intractable in practice. As a consequence, we will investigate a convex CVaR approximation, which can be solved with off-the-shelf solvers. Note that, unlike the approximations in \cite{hotaDataDrivenChanceConstrained2018, chenDataDrivenChanceConstrained, Weijun2018}, our approximation still holds for general nonlinear $f(x,\xi)$ and only requires mild assumptions on the dependence on the uncertainty $\xi$.
For more technical details, we refer to \cite[Theorem~3.1]{zhuKernelDistributionallyRobust2020}.

\begin{figure}[t!]
    \centering
    \includegraphics[scale=0.85]{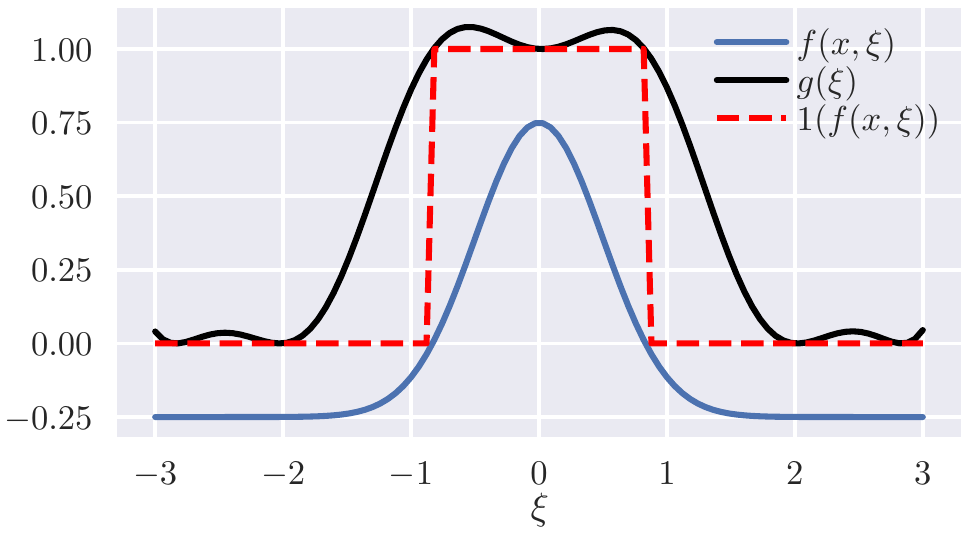}
    \caption{Visualization of the majorization of the RKHS-function $g(\xi)$ examplarily for a Gaussian constraint function $f$ and fixed $x$.}
    \label{fig:majorization}
\end{figure}

%%%%%%%%%%%%%%%%%%%%%%%%%%%%%%%%%%%%%%%%%%%%%%%%%%%%%%%%%%%%%%%%%%%%%%%%%%%%%
%%%%%%%%%%%%%%%%%%%%%%%%%%%%%%%%%%%%%%%%%%%%%%%%%%%%%%%%%%%%%%%%%%%%%%%%%%%%%
\subsection{CVaR approximation}
\label{subsec:CVaR_Relax}
In this section we present a convex inner approximation of the feasible set \eqref{eq:exact_ref} based on the CVaR.
First, note that we can rewrite the chance constraint in \eqref{eq:ccp} equivalently in terms of the Value-at-Risk (VaR) which is defined as
\begin{align*}
    \operatorname{VaR}^{P_0}_{1-\alpha}[f(x,\xi)] = \inf \{ t \in \mathbb{R}: P_0[f(x,\xi) \leq t] \geq 1-\alpha \},
\end{align*}
where $f(x,\xi)$ is to interpreted as a random variable. With this definition, it is straightforward to observe
\begin{align}
    \operatorname{VaR}^{P_0}_{1-\alpha}[f(x,\xi)] \leq 0 \ \ \Leftrightarrow \ \ P_0[f(x,\xi) \leq 0] \geq 1-\alpha.
\end{align}
While generally the VaR constraint is non-convex even for convex constraint functions $f(x,\xi)$, it is has been shown by \cite{nemirovskiConvexApproximationsChance}, building on the idea of \cite{TyrrellRockafellar}, that the tightest conservative convex approximation of VaR is given by the conditional value-at-risk (CVaR) defined as
\begin{align}
    \operatorname{CVaR}_{1-\alpha}^{P_0}[f(x,\xi)] = \inf_{t \in \mathbb{R}} \mathbb{E}_{P_0}[ [f(x,\xi) + t]_+ - t \alpha],
\end{align}
where $[\cdot]_+ = \max[0, \cdot]$ denotes the maximum operator.
Using this result we can express the tightest convex conservative approximation of our distributionally robust constraint \eqref{eq:drcc} as
\begin{equation}
    \sup_{P \in \mathcal P} \CVaR_{1-\alpha}^{P}[f(x, \xi)] = \sup_{P \in \mathcal P} \inf_{t\in \mathbb R} [\mathbb E_P[ [f(x, \xi) + t]_+ ] - t\alpha] \leq 0
    \label{eq:1}
\end{equation}
The following Lemma is based on the stochastic min-max equality theorem of \cite{shapiroMinimaxAnalysisStochastic2002} and shows that we can exchange the supremum and infimum in \eqref{eq:1}.
\begin{lemma}
\label{lemma:minmax}
    Let $\Xi \subset \mathbb{R}^m$, and $f: \mathcal{X} \times \mathbb{R}^m \rightarrow \mathbb{R}$ such that $\xi \mapsto f(x,\xi)$ is bounded on $\Xi$, $\forall x \in \mathcal{X}$.
    Then,
\begin{equation}
     \sup _{P \in \mathcal{P}} \inf _{t \in \mathbb{R}} \mathbb{E}_{P}[ [f(x, \xi) + t]_+ ] - t\alpha] = 
     \inf _{t \in \mathbb{R}} \sup _{P \in \mathcal{P}} \mathbb{E}_{P}[ [f(x, \xi) + t]_+ ] - t\alpha].
     \label{eq:2}
\end{equation}
\end{lemma}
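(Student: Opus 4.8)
Write $\Psi(t,P)\coloneqq \mathbb{E}_P[\,[f(x,\xi)+t]_+\,]-t\alpha$, so that the claim reads $\sup_{P\in\mathcal P}\inf_{t\in\mathbb R}\Psi(t,P)=\inf_{t\in\mathbb R}\sup_{P\in\mathcal P}\Psi(t,P)$; only the inequality ``$\geq$'' needs work, the reverse being the generic maximin-minimax inequality. The plan is to first compactify the inner variable $t$ and then invoke a minimax theorem (Sion's theorem, or equivalently the stochastic minimax equality of \cite{shapiroMinimaxAnalysisStochastic2002}, which is tailored to exactly this convex-in-$t$, affine-in-$P$ situation).

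First I would replace $\inf_{t\in\mathbb R}$ by $\min_{t\in T}$ with $T\coloneqq[-M,M]$ and $M\coloneqq\sup_{\xi\in\Xi}|f(x,\xi)|<\infty$, which is finite by the boundedness hypothesis. For fixed $P\in\mathcal P$ the map $t\mapsto\Psi(t,P)$ is finite and convex, and (differentiating under the integral, the integrand being $1$-Lipschitz in $t$) its derivative equals $P[f(x,\xi)+t>0]-\alpha$ wherever it exists. Because every $P\in\mathcal P$ is supported in $\Xi$, this quantity equals $1-\alpha>0$ for all $t>M$ and equals $-\alpha\le 0$ for all $t<-M$, \emph{uniformly in $P$}. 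Hence neither $\Psi(\cdot,P)$ (for any fixed $P$) nor $t\mapsto\sup_{P\in\mathcal P}\Psi(t,P)$ can have a minimizer outside $T$, so both $\sup_P\inf_t$ and $\inf_t\sup_P$ are unchanged when the inner optimization is restricted to the compact convex interval $T$, and the restricted inner minima are attained.

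Next I would verify the hypotheses of the minimax theorem on $T\times\mathcal P$: (i) $T$ is compact and convex; (ii) $\mathcal P$ is convex, since the kernel mean embedding $P\mapsto\mu_P$ is affine and therefore $\mathcal P=\{P:\|\mu_P-\mu_{\pemp}\|_{\mathcal H}\le\varepsilon\}$ is the preimage of a norm ball under an affine map; (iii) for each $P$, $t\mapsto\Psi(t,P)$ is convex and continuous on $T$; (iv) for each $t$, $P\mapsto\Psi(t,P)$ is affine, hence concave and quasiconcave, and — topologizing $\mathcal P$ by weak convergence — upper semicontinuous, because $\xi\mapsto[f(x,\xi)+t]_+$ is bounded and, when $f(x,\cdot)$ is upper semicontinuous, upper semicontinuous, so $P\mapsto\int [f(x,\xi)+t]_+\,dP$ is weakly u.s.c.\ by the Portmanteau theorem. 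Taking the compact side to be $T$, Sion's theorem then gives $\min_{t\in T}\sup_{P\in\mathcal P}\Psi(t,P)=\sup_{P\in\mathcal P}\min_{t\in T}\Psi(t,P)$, which together with the first step is the assertion.

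The step I expect to be the main obstacle is (iv): securing enough (semi)continuity of $P\mapsto\mathbb{E}_P[\,\cdot\,]$ for the minimax theorem to apply. If $f(x,\cdot)$ is only bounded and measurable, the clean u.s.c.\ argument above breaks down, and one must instead work in the weak topology on $\mathcal P$ and exploit weak compactness/tightness of the MMD ball — which needs the kernel to be bounded and continuous (and, for non-compact $\Xi$, a separate tightness argument) — before applying Shapiro's stochastic minimax equality. A minor, purely bookkeeping point is that the uniform-in-$P$ monotonicity used to compactify $t$ genuinely relies on $|f(x,\xi)|\le M$ holding $P$-almost surely for every $P\in\mathcal P$, i.e.\ on all admissible distributions being supported in $\Xi$.
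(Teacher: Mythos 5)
Your proof is correct in substance but takes a genuinely different route from the paper. The paper's proof is a one-line deferral: it invokes Lemma~IV.2 of \cite{hotaDataDrivenChanceConstrained2018}, which applies the stochastic minimax theorem of \cite{shapiroMinimaxAnalysisStochastic2002} and hinges on the ambiguity set $\mathcal{P}$ being convex and \emph{weakly compact} (the paper cites \cite{zhuKernelDistributionallyRobust2020} for weak compactness of the MMD ball). You instead compactify the \emph{inner} variable: the uniform-in-$P$ monotonicity of $t\mapsto\Psi(t,P)$ outside $[-M,M]$ lets you replace $\inf_{t\in\mathbb{R}}$ by $\min_{t\in[-M,M]}$ on both sides, after which Sion's theorem applies with the compact side being $T$, so no compactness of $\mathcal{P}$ is needed at all. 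This is arguably the cleaner trade: weak compactness of an MMD ball is itself a nontrivial claim (it requires a tightness argument when $\Xi$ is non-compact), whereas your compactification uses exactly the stated hypothesis that $f(x,\cdot)$ is bounded on $\Xi$. Regarding the obstacle you flag in step (iv): it dissolves if you topologize the space of probability measures by the weak topology $\sigma(\mathcal{M},\mathcal{B})$ generated by \emph{bounded measurable} test functions rather than by weak convergence of measures; then $P\mapsto\mathbb{E}_P\bigl[[f(x,\xi)+t]_+\bigr]$ is continuous (not merely u.s.c.) for any bounded measurable $f(x,\cdot)$, $\mathcal{P}$ is a convex subset of a Hausdorff linear topological space, and Sion's hypotheses hold with no semicontinuity assumption on $f(x,\cdot)$ and no compactness of $\mathcal{P}$. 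Your closing bookkeeping point is well taken and applies to the paper's version too: both arguments implicitly require every $P\in\mathcal{P}$ to be supported on $\Xi$, which the paper's definition \eqref{eq:amb_set} does not state explicitly.
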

\begin{proof}
The proof is identical to the one of Lemma IV.2.\ of \cite{hotaDataDrivenChanceConstrained2018} for Wasserstein ambiguity sets, using that the MMD-based uncertainty set $\mathcal{P}$ is also weakly compact \cite{zhuKernelDistributionallyRobust2020}.
\end{proof}

Applying the generalized duality result of \cite{zhuKernelDistributionallyRobust2020} to the supremum in the RHS of \eqref{eq:2} yields the following result which provides an expression for the feasible set of the CVaR approximation of MMD-DRCCP.

\begin{proposition}
Let the conditions of Lemma~\ref{lemma:minmax} be fulfilled. Then the distributionally robust CVaR (DR-CVaR) constraint $\sup_{P \in \mathcal P} \CVaR_{1-\alpha}^{P}[f(x, \xi)] \leq 0$ is equivalent to $x \in Z_{\CVaR}$, where
    \begin{subequations}
        \label{eq:z_cvar}
        \begin{empheq}[
        left={Z_{\CVaR}\coloneqq \empheqlbrace x \in \mathcal{X}:},
        right=\empheqrbrace]
        {align}
            & g_0 + \frac{1}{N}\sum_{i=1}^N g(\xi_i) + \varepsilon \| g \|_{\mathcal{H}} \leq t \alpha 
            \label{subeq:cvar_risk}\\
            & [f(x,\xi) + t]_+ \leq g_0 + g(\xi) \ \ \forall \xi \in \Xi  \label{eq:semi-infinite}\\
            & g \in \mathcal{H}, \quad t \in \mathbb{R}
        \end{empheq}
    \end{subequations}
\end{proposition}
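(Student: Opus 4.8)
The plan is to chain together three facts already assembled in the excerpt: the equivalence between the DR-CVaR constraint and the sup--inf expression in \eqref{eq:1}, the min-max interchange established in Lemma~\ref{lemma:minmax}, and the strong duality result of \cite{zhuKernelDistributionallyRobust2020} applied to the inner supremum. First I would start from the constraint $\sup_{P \in \mathcal P} \CVaR_{1-\alpha}^{P}[f(x, \xi)] \leq 0$ and unfold the definition of CVaR, exactly as in \eqref{eq:1}, to write it as $\sup_{P \in \mathcal P} \inf_{t \in \mathbb R} \big(\mathbb E_P[[f(x,\xi)+t]_+] - t\alpha\big) \leq 0$. Since $\xi \mapsto f(x,\xi)$ is bounded on $\Xi$ by the hypothesis carried over from Lemma~\ref{lemma:minmax}, and $[\cdot]_+$ is $1$-Lipschitz, the integrand $[f(x,\xi)+t]_+$ is bounded for each fixed $t$, so Lemma~\ref{lemma:minmax} applies verbatim and lets me swap to $\inf_{t \in \mathbb R} \sup_{P \in \mathcal P} \big(\mathbb E_P[[f(x,\xi)+t]_+] - t\alpha\big) \leq 0$.

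Next I would peel off the $-t\alpha$ term, which does not depend on $P$, so the inner object is $\inf_{t}\big(\sup_{P\in\mathcal P}\mathbb E_P[[f(x,\xi)+t]_+] - t\alpha\big)$. For fixed $x$ and $t$, the map $\xi \mapsto [f(x,\xi)+t]_+$ is a bounded (nonnegative) function on $\Xi$, which is precisely the setting in which Theorem~3.1 of \cite{zhuKernelDistributionallyRobust2020} gives strong duality for the MMD-penalized worst-case expectation: the Slater-type condition holds trivially because one can always majorize a bounded function by a constant (a constant lies in $\mathcal H$ when $k$ admits one, or one handles the constant via $g_0$). Applying that theorem,
\[
\sup_{P \in \mathcal P}\mathbb E_P[[f(x,\xi)+t]_+]
= \min_{g \in \mathcal H,\, g_0 \in \mathbb R}\ \Big\{ g_0 + \tfrac{1}{N}\sum_{i=1}^N g(\xi_i) + \varepsilon \|g\|_{\mathcal H} \ :\ [f(x,\xi)+t]_+ \leq g_0 + g(\xi)\ \forall \xi \in \Xi \Big\}.
\]
Substituting this into the constraint and absorbing the minimization over $(g,g_0)$ and the infimum over $t$ into the feasibility description, the condition $\sup_{P\in\mathcal P}\CVaR^P_{1-\alpha}[f(x,\xi)]\le 0$ becomes: there exist $t\in\mathbb R$, $g\in\mathcal H$, $g_0\in\mathbb R$ with $g_0 + \tfrac1N\sum_i g(\xi_i) + \varepsilon\|g\|_{\mathcal H} - t\alpha \leq 0$ and $[f(x,\xi)+t]_+ \leq g_0 + g(\xi)$ for all $\xi\in\Xi$. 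Rearranging the first inequality to \eqref{subeq:cvar_risk} and keeping the semi-infinite constraint \eqref{eq:semi-infinite} gives exactly $x \in Z_{\CVaR}$, and the equivalence is established in both directions since every step (unfolding CVaR, the min-max swap, strong duality) is an identity, not a relaxation.

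The main obstacle I anticipate is verifying that the hypotheses of Theorem~3.1 of \cite{zhuKernelDistributionallyRobust2020} are genuinely met by the integrand $[f(x,\xi)+t]_+$ rather than just by the indicator used in Proposition~\ref{thm:exact}: one needs boundedness (supplied by the Lemma~\ref{lemma:minmax} hypothesis), possibly some measurability/semicontinuity regularity on $\xi\mapsto f(x,\xi)$, and the Slater condition for the dual problem. I would spell out that because $\xi\mapsto f(x,\xi)$ is bounded, $[f(x,\xi)+t]_+ \le [\,\sup_\xi f(x,\xi)+t\,]_+ =: M_{x,t} < \infty$, so the constant function $M_{x,t}$ (taken as $g_0 = M_{x,t}$, $g \equiv 0$) is strictly feasible for the dual, securing Slater. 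A secondary, more bookkeeping-level point is justifying the interchange of the outer $\inf_t$ with the $\min_{g,g_0}$ so that all three quantifiers ($t$, $g$, $g_0$) can be read as existential in the set description of $Z_{\CVaR}$; this is immediate once one notes that the constraint is simply ``the infimum over $t$ of a minimum over $(g,g_0)$ is $\le 0$,'' which holds iff some admissible triple achieves value $\le 0$. Everything else is algebraic rearrangement.
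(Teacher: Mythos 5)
Your proposal follows essentially the same route as the paper's proof: unfold the CVaR, invoke Lemma~\ref{lemma:minmax} to swap the supremum and infimum, apply the generalized duality result of \cite{zhuKernelDistributionallyRobust2020} to the inner worst-case expectation, and convert ``infimum over $(t,g_0,g)$ is $\leq 0$'' into an existential feasibility statement. The additional care you take in checking boundedness of $[f(x,\xi)+t]_+$ and the Slater condition is a welcome elaboration of steps the paper leaves implicit, but it does not change the argument.
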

\begin{proof}
    The proof follows directly from Lemma~\ref{lemma:minmax} and the generalized duality result of \cite{zhuKernelDistributionallyRobust2020} applied to the inner supremum in \eqref{eq:2}
    \begin{align*}
        &\sup_{P \in \mathcal P} \CVaR_{1-\alpha}^{P}[f(x, \xi)] \\
        &=\inf _{t \in \mathbb{R}} \sup _{P \in \mathcal{P}} \mathbb{E}_{P}[ [f(x, \xi) + t]_+  - t\alpha]  \\
        &= \inf_{g_0, t \in \mathbb{R}, g \in \mathcal{H}} g_0 + \frac{1}{N} \sum_{i=1}^N g(\xi_i) + \varepsilon \|g \|_{\mathcal{H}} - t \alpha \\
        & \quad \text{s.t.} \ [f(x, \xi) + t]_+  \leq g_0 + g(\xi) \ \ \forall \xi \in \Xi. 
    \end{align*}
    The requirement of the infimum over $(t,g_0,g)$ to be $\leq 0$ is equivalent to requiring that there exists any $t,g_0 \in \mathbb{R}$ and $g \in \mathcal{H}$ such that  $g_0 + 1/N \sum_{i=1}^N g(\xi_i) + \varepsilon \|g \|_\mathcal{H} - t \alpha \leq 0$ and thus the result follows.
\end{proof}

We now use a constraint sampling approximation to the infinite constraint \eqref{eq:semi-infinite} following \cite{zhuKernelDistributionallyRobust2020}. Namely, the constraint \eqref{eq:semi-infinite} is replaced by its empirical version
\begin{align}
    \label{eq:constr_sampled}
    [f(x,\xi_i) + t]_+ \leq g_0 + g(\xi_i), \quad i=1,\ldots,N.
\end{align}
However, unlike the convergence analysis for semi-infinite program discretization such as in \cite{roysetRateConvergenceAnalysis2012}, we later provide a finite-sample guarantee with a convergence rate of (at least) $\oneOverRootN$ independent of the dimensionality of the $\xi$ variable.
Using the robust representer theorem of \cite{zhuKernelDistributionallyRobust2020}, we can express the RKHS function $g$ in terms of finite dimensional parameters $\gamma \in \mathbb{R}^{N}$. Let $K$ denote the kernel Gram matrix with $K_{i,j} = k(\xi_i, \xi_j)$, then we can write the sample approximation of \eqref{eq:z_cvar} as
\begin{subequations}
    \label{eq:approx_sem_inf}
    \begin{empheq}[
    left={\hat{Z}_{\CVaR} \coloneqq \empheqlbrace x \in \mathcal{X} :},
    right=\empheqrbrace]
    {align}
        & g_0 + \frac{1}{N}\sum_{i=1}^N (K\gamma)_i + \varepsilon \sqrt{\gamma^T K \gamma } \leq t \alpha \label{eq:representer1}\\
        &  [f(x,\xi_i) + t]_+ \leq g_0 + (K \gamma)_i, \quad i=1,\ldots,N \\
        & g_0 \in \mathbb{R}, \ \gamma \in \mathbb{R}^{N}, \ t \in \mathbb{R} \label{eq:representer3}
    \end{empheq}
\vspace*{0.05cm}
\end{subequations}

In contrast to DRCCP using Wasserstein ambiguity sets that requires ad-hoc reformulation techniques depending on various forms of $f$ (cf. Table~\ref{tab:drccp_comp} and \cite{hotaDataDrivenChanceConstrained2018}),
we simply use \eqref{eq:approx_sem_inf} to treat all forms of $f$.
This generality is due to that RKHSs are universal function approximators~\cite{steinwartSupportVectorMachines2008,wendlandScatteredDataApproximation2004}.
This has also been empirically demonstrated in large-scale adversarial learning tasks with deep neural networks, as reported in \cite{zhuAdversariallyRobustKernel2021}.
When the function $f$ is convex in the decision variable $x$, the problem~\eqref{eq:approx_sem_inf} is a convex kernel approximation problem and can be solved with an off-the-shelf convex optimization solver.

\begin{remark}
Similar to DRCCP with Wasserstein ambiguity sets,
within our MMD-DRCCP framework, one can also obtain so-called exact reformulations using certain kernel choices.
For example, for piece-wise linear constraints supported on a closed convex cone, one can derive tractable CVaR approximations by choosing linear kernels.
However, such reformulations only apply to ad-hoc $f$ classes and restrict kernel choices to less expressive kernels, e.g., linear kernel.
In those cases, MMD is not a metric since it only detects differences in the first moment.
Therefore, we favor the single general approximation \eqref{eq:approx_sem_inf} over such exact reformulations.
\end{remark}
\begin{remark}
    Note that the convexity of the inner CVaR approximation is inherited from the convexity of the function $f$ with respect to $x$, i.e., our reformulation technique is \emph{convexity-preserving} but can also treat non-convex $f$ in practice.
    We later demonstrate this in an optimal control problem with non-convex constraints.
\end{remark}

\section{Finite-sample guarantee for constraint satisfaction}
\label{sec:guarantee}
Since we rely on the approximate formulation \eqref{eq:approx_sem_inf} to treat general nonlinear constraints, an important task is to quantify the approximation error.
Furthermore, we are interested in finite-sample analysis instead of asymptotic consistency results such as reported in \cite{cherukuriConsistencyDistributionallyRobust2020} for Wasserstein ambiguity sets, since the former is more informative for quantifying robustness against estimation error.
Our result in this section shows that, using \eqref{eq:approx_sem_inf}, we can solve the MMD-DRCCP with a finite-sample guarantee for constraint satisfaction at the $\oneOverRootN$ rate independent of dimensions.
This concentration result gives the foundation for the distributional robustness of \eqref{eq:approx_sem_inf}.
Existing analysis for MMD ambiguity sets (see e.g., \cite{lamComplexityFreeGeneralizationDistributionally2021} for references) only concerns the exact solution to \eqref{eq:drcc}, which is unavailable in practice.
To date, the only available algorithm to solve DRO problems with MMD ambiguity sets is that of \cite{zhuKernelDistributionallyRobust2020}, whose statistical guarantee is not yet established.
In contrast, we now show the first finite-sample guarantee of constraint satisfaction for the proposed practical algorithm. Furthermore, the finite-sample analysis in this section only assumes mild boundedness for the constraint function $f$, which is significantly less restrictive than the Wasserstein ambiguity sets.
In the proofs, we make the mild assumptions that the function
$f(x, \cdot)$ and $g$ are bounded in infinity norm, i.e., $\exists M>0: \forall \xi, |f(x, \xi)| \leq M/2, |g(\xi)| \leq M/2$.
For conciseness, we simple write $g$ instead of $g_0+g$ since one can re-define a new RKHS to include the constant $g_0$ term.
\begin{proposition}
    [Finite-sample guarantee for MMD-DRCCP constraint satisfaction]
    \label{thm:guarantee}
    Let $(\xh, \gh)$ be a pair of the solution to the CVaR approximation\eqref{eq:approx_sem_inf}.
Suppose the kernel is bounded in the sense that $\exists C>0, \sup_x|k(x,x)|\leq C$, and the radius of the MMD ambiguity set satisfies
$\rho \geq \sqrt{\frac{C}{N}} + \sqrt{\frac{2C\log(1/\delta)}{N}}$ (see \eqref{eq:rate}).
Then,
$\xh$ is an $\confidTerm$-approximate feasible solution to the MMD-CVaR approximation~\eqref{eq:z_cvar} with probability at least $1-\delta$, i.e.,
\begin{equation}
    \label{eq:result_pf_cvar}
    \operatorname{CVaR}^{P_0}_{1-\alpha}[f(\xh, \xi)] \leq \confidTerm.
\end{equation}
% where $\E\to 0$ as $n\to\infty$.
\end{proposition}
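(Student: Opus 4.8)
The plan is to bound $\CVaR^{P_0}_{1-\alpha}[f(\xh,\xi)]$ directly by a short chain of inequalities that uses only the two constraints actually satisfied by the returned triple $(\xh,\hat t,\gh)$ in the sampled program \eqref{eq:approx_sem_inf} — the risk budget \eqref{eq:representer1} and the pointwise-at-samples majorization — together with two concentration facts: that the MMD radius hypothesis makes $P_0$ an element of the ambiguity set with probability $1-\delta$ (this is exactly \eqref{eq:rate}), and a Hoeffding-type deviation bound for a bounded function. I absorb $g_0$ into $g$ as in the statement, write $\gh$ for $g_0+g$ at the solution, and abbreviate $h(\xi):=[f(\xh,\xi)+\hat t]_+$. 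From $|f(\xh,\cdot)|\le M/2$, $|\gh|\le M/2$ and the two constraints of \eqref{eq:approx_sem_inf} one first checks that $\hat t$ is confined to a bounded interval (e.g.\ $\gh(\xi_i)\ge[f(\xh,\xi_i)+\hat t]_+\ge 0$ forces $\Ehat[\gh]\ge0$, hence $\rho\hnorm{\gh}\le\hat t\alpha$, and $\gh(\xi_i)\le M/2$ gives $\hat t\le M$), so $h$, and hence $h-\gh$, ranges in an interval of length $O(M)$.

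First, by the variational characterization of CVaR, the suboptimal choice $t=\hat t$ gives $\CVaR^{P_0}_{1-\alpha}[f(\xh,\xi)]\le\Etrue[h]-\hat t\alpha$, which I write as $\Etrue[h-\gh]+\big(\Etrue[\gh]-\hat t\alpha\big)$. On the event $\{\MMD(P_0,\pemp)\le\rho\}$ — which by \eqref{eq:rate} and the assumption on $\rho$ has probability at least $1-\delta$ — the reproducing property and Cauchy--Schwarz in $\mathcal H$ give $\Etrue[\gh]-\Ehat[\gh]=\hip{\mu_{P_0}-\mu_{\pemp}}{\gh}\le\MMD(P_0,\pemp)\,\hnorm{\gh}\le\rho\hnorm{\gh}$; combined with the risk constraint $\Ehat[\gh]+\rho\hnorm{\gh}\le\hat t\alpha$, this yields $\Etrue[\gh]-\hat t\alpha\le0$. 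Hence $\CVaR^{P_0}_{1-\alpha}[f(\xh,\xi)]\le\Etrue[h-\gh]$.

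It remains to bound $\Etrue[h-\gh]$. Split it as $\big(\Etrue[h-\gh]-\Ehat[h-\gh]\big)+\Ehat[h-\gh]$. The empirical term equals $\frac1N\sum_{i=1}^N\big([f(\xh,\xi_i)+\hat t]_+-\gh(\xi_i)\big)\le0$ by the sampled majorization constraint of \eqref{eq:approx_sem_inf}. The first term is a one-sided deviation of the bounded function $h-\gh$ between $P_0$ and $\pemp$, and Hoeffding's inequality (equivalently a bounded-differences argument) bounds it, with probability at least $1-\delta$, by a quantity of the form $M/\sqrt N$ times $\sqrt{\log(1/\delta)}$; with the normalization $|f(x,\cdot)|,|g|\le M/2$ the constant is exactly $\confidTerm$. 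Combining the three displays and taking a union bound over the MMD-coverage event and the deviation event gives \eqref{eq:result_pf_cvar} with probability at least $1-\delta$ (the union bound costs a factor $2$ in the failure probability, removed by a routine $\delta\to\delta/2$ rebalancing that only changes constants).

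The step I expect to be the real obstacle is making the Hoeffding bound rigorous: $h-\gh$ is not a prescribed function but depends on the sample through the optimizers $(\xh,\hat t,\gh)$, so one cannot literally invoke Hoeffding for a fixed function. The remedy is that the constraints pin $\gh$ into an RKHS ball of \emph{data-independent} radius (read off from the risk constraint together with $\Ehat[\gh]\ge0$) and $\hat t$ into a bounded interval, so one may pass to a uniform deviation bound over $x\in\domain$, $t$ in that interval and $g$ in that RKHS ball; the decisive point is that the empirical process attached to the $-g$ part is controlled by the ball radius times $\sqrt{C/N}$, which is \emph{dimension-free}, so no dependence on $m$ or $n$ enters — precisely the feature separating the MMD geometry from the Wasserstein one. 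A secondary bookkeeping issue is the union bound combining the MMD-coverage event with the deviation event.
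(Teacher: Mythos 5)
Your proof follows essentially the same route as the paper's: decompose $\Etrue[f(\xh,\xi)+\hat t]_+$ into the $\gh$ part (controlled by Cauchy--Schwarz in $\mathcal H$ and the coverage event $\MMD(P_0,\pemp)\le\rho$ guaranteed by \eqref{eq:rate}) and the remainder $[f(\xh,\xi)+\hat t]_+-\gh$ (nonpositive at the samples by \eqref{eq:constr_sampled}, then concentrated by a Hoeffding/bounded-differences bound), and close with the risk constraint \eqref{subeq:cvar_risk}. You are in fact more careful than the paper on the two points it glosses over: the data-dependence of $(\xh,\hat t,\gh)$ in the concentration step (the paper invokes McDiarmid for what is effectively a sample-dependent function, without the uniformization over the bounded RKHS ball and the $t$-interval that you sketch) and the union bound over the coverage and deviation events; your proposed remedies for both are the correct ones.
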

\begin{proof}
We first expand the left-hand-side of \eqref{eq:result_pf_cvar},
\begin{equation}
    \operatorname{CVaR}^{P_0}_{1-\alpha}[f(\xh, \xi)] = 
     \inf_t\frac{1}{\alpha}\Etrue [f(x,\xi) + t]_+   -t.
\end{equation}
Note that the expectation term in CVaR can be written as
\begin{equation}
    \begin{aligned}
        \label{pf:split_1}
        \Etrue[f(\xh,\xi) + t]_+
        % +t\alpha
        &=
        \Etrue \gh + \Etrue( [f(\xh,\xi) + t]_+ - \gh ) 
        % \\
    \end{aligned}
\end{equation}

For the first term in \eqref{pf:split_1}, we have
\begin{multline}
    \Etrue \gh = \Ehat\gh + \int{\gh}\ d{(P_0 - \pemp)}
    \leq \Ehat\gh + \MMD(P_0,\pemp)\cdot \hnorm{\gh}
    \leq \Ehat\gh + \rho \hnorm{\gh},
\end{multline}
where the first inequality is simply Cauchy-Schwarz and the second inequality is due to the condition that $\rho \geq \MMD_\rkhs (P_0, \pemp)$ with high probability as noted in \eqref{eq:rate}.
For ease of notation, let
\begin{equation*}
    \E := \confidTerm.
\end{equation*}

For the second term in \eqref{pf:split_1}, we apply standard concentration results, namely, McDiarmid inequality
\begin{equation}
        \Etrue( [f(\xh,\xi) + t]_+ - \gh )
        \leq
        \Ehat( [f(\xh,\xi) + t]_+ - \gh )
        +\E \leq 0 + \E
\end{equation}
For the last inequality above, we exploited the relationship $[f(\xh,\xi) + t]_+ \leq \gh$ that holds at the empricial sample $\xi_i$ due to the constraints in \eqref{eq:constr_sampled}.
Plugging both inequalities back into \eqref{pf:split_1}, we obtain
\begin{equation}
        \Etrue[f(\xh,\xi) + t]_+
        \leq
        \Ehat\gh + \rho \hnorm{\gh}
        + \E.
\end{equation}
Combining the result above with the relationship in \eqref{subeq:cvar_risk} of the CVaR approximation of the MMD-DRCCP, we arrive at the proposition statement.
\end{proof}

Using the relationship between CVaR and chance constraints, this result directly translates to the following.
\begin{corollary}
    Under the same assumption as Proposition~\ref{thm:guarantee}, with probability at least $1-\delta$,
    \begin{equation}
        \label{eq:result_pf_cc}
        P\left(f(\xh, \xi) \leq \confidTerm\right) \geq 1-\alpha
    \end{equation}
\end{corollary}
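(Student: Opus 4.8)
The plan is to chain the $\CVaR$ bound of Proposition~\ref{thm:guarantee} with the two elementary facts relating $\CVaR$, $\VaR$ and chance constraints that were already recalled in Section~\ref{subsec:CVaR_Relax}. First I would invoke Proposition~\ref{thm:guarantee}: on an event $\mathcal A$ of probability at least $1-\delta$ we have $\CVaR^{P_0}_{1-\alpha}[f(\xh,\xi)] \leq \confidTerm$. Every subsequent step is deterministic once we are on $\mathcal A$, so the final probability statement will automatically carry the same $1-\delta$ confidence.

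Next I would use the conservativeness of $\CVaR$ with respect to $\VaR$, namely $\VaR^{P_0}_{1-\alpha}[Y] \leq \CVaR^{P_0}_{1-\alpha}[Y]$ for any (here bounded, hence integrable) random variable $Y$ --- this is precisely the ``tightest conservative convex approximation'' property of \cite{nemirovskiConvexApproximationsChance, TyrrellRockafellar} recalled in Section~\ref{subsec:CVaR_Relax}, and it can be read off directly from the variational formula for $\CVaR$ by evaluating the infimum over $t$ at $t = -\VaR^{P_0}_{1-\alpha}[Y]$. Applying it with $Y = f(\xh,\xi)$ gives, on $\mathcal A$, $\VaR^{P_0}_{1-\alpha}[f(\xh,\xi)] \leq \confidTerm$.

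Finally I would unwind the definition of $\VaR$. Since $\VaR^{P_0}_{1-\alpha}[Y] = \inf\{t \in \mathbb R : P_0[Y \leq t] \geq 1-\alpha\}$ and the cumulative distribution function $t \mapsto P_0[Y\leq t]$ is right-continuous, this infimum is attained, i.e. $P_0[Y \leq \VaR^{P_0}_{1-\alpha}[Y]] \geq 1-\alpha$. Combining this with monotonicity of the CDF and the bound from the previous step, $P_0[f(\xh,\xi) \leq \confidTerm] \geq P_0[f(\xh,\xi) \leq \VaR^{P_0}_{1-\alpha}[f(\xh,\xi)]] \geq 1-\alpha$ on $\mathcal A$, which is exactly \eqref{eq:result_pf_cc} (with $P = P_0$). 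I do not expect a genuine obstacle here; the only point requiring a little care is the attainment of the infimum in the $\VaR$ definition, which uses right-continuity of the CDF and is the reason \eqref{eq:result_pf_cc} is stated with the non-strict ``$\leq \confidTerm$'' on the right-hand side. The boundedness assumption on $f(x,\cdot)$ imposed at the start of Section~\ref{sec:guarantee} ensures $f(\xh,\xi)$ is integrable, so that $\CVaR$ and $\VaR$ are finite and all of the above manipulations are legitimate.
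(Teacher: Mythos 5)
Your proof is correct and follows exactly the route the paper intends: the paper offers no explicit argument beyond the remark that the corollary ``directly translates'' from Proposition~\ref{thm:guarantee} via the $\CVaR$--chance-constraint relationship, and your chain $\CVaR \geq \VaR$ (from evaluating the variational formula at $t=-\VaR$) followed by attainment of the $\VaR$ infimum by right-continuity of the CDF is precisely that relationship spelled out. The only point worth noting is that the paper's two displayed normalizations of $\CVaR$ (with and without the $1/\alpha$ factor) are inconsistent with each other; your argument uses the $1/\alpha$-normalized form, which is the one actually employed in the proof of Proposition~\ref{thm:guarantee}, so the constant $\confidTerm$ carries through as stated.
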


Our guarantees above state that the MMD-DRCCP solution approximately satisfy the DRCCP constraint with a rate of $\oneOverRootN$ independent of dimensions.
This can further motivate a constraint back-off design that adds to the left-hand-side of 
\eqref{subeq:cvar_risk} and \eqref{eq:representer1} by the $\confidTerm$ term to guarantee safety.
        \label{{}}
That way, the $\confidTerm$ term will no longer appear in our guarantee statements \eqref{eq:result_pf_cvar} and \eqref{eq:result_pf_cc}.
However, due to the general conservatism of DR-CVaR approximations and ambiguity set sizes, we observe the current MMD-DRCCP alone is sufficient in practice.

\begin{remark}
    The convergence guarantees presented above can be further made uniform w.r.t. the decision variables $(x, g)$ using uniform convergence results for empirical processes~\cite{vandervaartWeakConvergence1996}.
    In addition, we leave further refinement of the convergence rate beyond $\oneOverRootN$ for future work.
\end{remark}

\section{Numerical examples}
\label{sec:num_exp}
In the following section, we present numerical results for our MMD-DRCCP algorithms. Within a chance constrained portfolio optimization problem, we provide empirical evidence to support our theoretical finite sample guarantee (Proposition~\ref{thm:guarantee}). Moreover we show that the bootstrap construction improves on the MMD-rate-based ambiguity set in terms of providing a less conservative ambiguity size.
Enabled by our theory, we simply use a Gaussian kernel $k(x, y) = \exp(-\frac{1}{2\sigma^2}||x-y||^2_2)$ with the bandwidth $\sigma$ set via the median heuristic \cite{garreauLargeSampleAnalysis2018} for all experiments.
We further emphasize that we do not exploit any ad-hoc transformations such as convex conjugate of certain specific $f$ functions.
We simply use our general approximation scheme \eqref{eq:approx_sem_inf} as a universal technique across all function classes.

%%%%%%%%%%%%%%%%%%%%%%%%%%%%%%%%%%%%%%%%%%%%%%%%%%%%%%%%%%%%%%%%%%%%%%%%%%%%%
%%%%%%%%%%%%%%%%%%%%%%%%%%%%%%%%%%%%%%%%%%%%%%%%%%%%%%%%%%%%%%%%%%%%%%%%%%%%%
\subsection{Chance-constrained portfolio optimization}
\begin{figure}[t]
    \centering
    \includegraphics[scale=0.85]{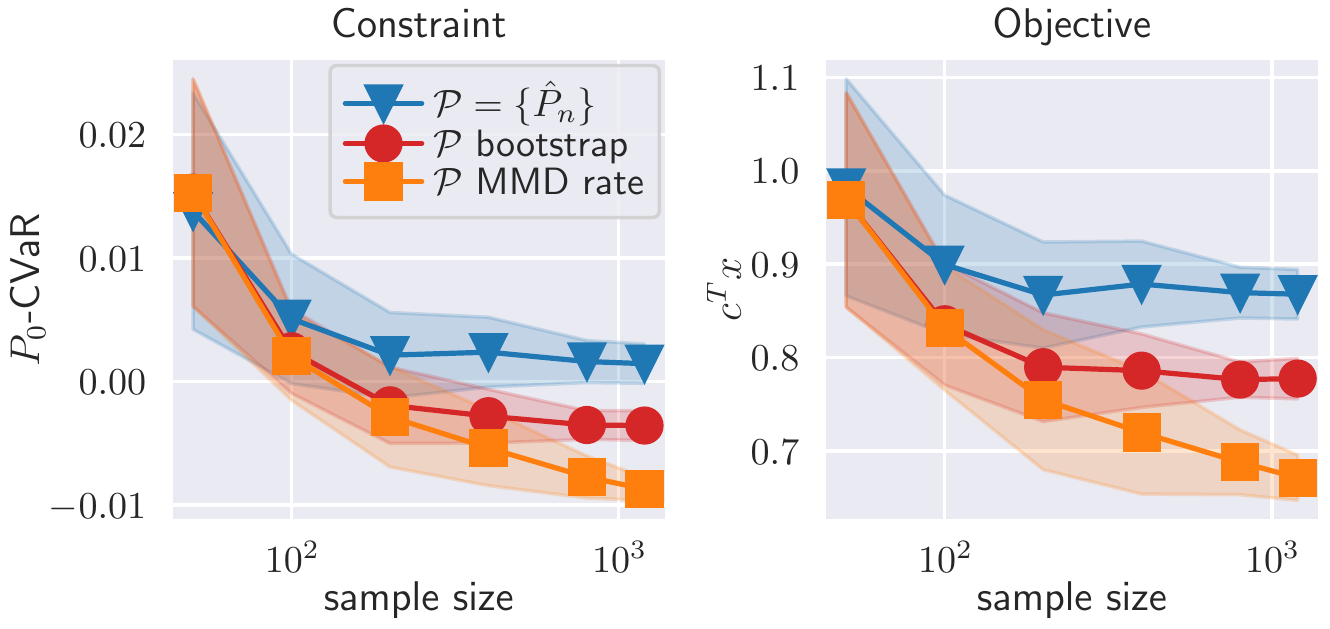}
    \caption{CVaR relaxation for chance constrained portfolio optimization. Lines and shaded regions denote the mean and standard deviation over 16 runs respectively.
    }
    \label{fig:cvar_exp}
\end{figure}
We consider a chance-constraint portfolio optimization problem, where we want to optimally allocate resources $x \in \Delta := \{x\in \mathbb{R}_{0,+}^3: \sum_{i=1}^3 x_i \leq 1\}$ to investments with returns $c = (1, 1.5, 2)^T \in \mathbb{R}^3$ in presence of a chance constraint depending on uncertain variables $\xi\sim \mathcal{N}(0,\operatorname{diag}[0.5, 1, 1.5])$:
\begin{align}
     \max_{x \in \Delta}  c^T x  \quad \text{s.t.} \quad   P [f(x,\xi) \leq 0] \geq 1 - \alpha, 
     \label{eq:ccp-portfolio}
\end{align}
where the constraint function is given by the nonlinear function $f(x,\xi) = (\xi^T x)^2 - 1$.
The CVaR approximation of the constraint can be written as $\operatorname{CVaR}_{1-\alpha}^{\pemp}[f(x;\xi)] \leq 0$ and the corresponding MMD-DRCCP is then given by 
\begin{align}
     \max_{x \in \Delta}  c^T x \quad \text{s.t.} \quad \sup_{P \in \mathcal{P}}\operatorname{CVaR}_{1-\alpha}^{P}[f(x;\xi)] \leq 0.
\end{align}

We construct MMD ambiguity sets using the MMD convergence rate $\eqref{eq:rate}$ as well as our bootstrap construction (see Algorithm~\ref{alg:bootstrap}). We solve the problem for different sample sizes via the convex reformulation \eqref{eq:representer1}-\eqref{eq:representer3} using CVXPY \cite{diamondCVXPYPythonEmbeddedModeling2016} and compare the results to a (non-robust) CVaR approximation of \eqref{eq:ccp-portfolio} (equivalent to DR-CVaR with ambiguity set $\mathcal{P} = \{\pemp\}$). At test time we sample $10^6$ data points from the true distribution in order to estimate $\operatorname{CVaR}^{P_0}_{1-\alpha}[f(\xh, \xi)]$. We observe in Figure~\ref{fig:cvar_exp} that, while the non-robust solution fails to fulfill the CVaR constraint for the population distribution across all numbers of training samples, both MMD-DRCCP solutions fulfill the constraint for training sample sizes $\geq 100$. Moreover, we observe that the bootstrap version yields a tighter estimate of the ambiguity set and thus a less conservative solution which allows for larger objective values as observed in the right panel of Figure~\ref{fig:cvar_exp}.

%%%%%%%%%%%%%%%%%%%%%%%%%%%%%%%%%%%%%%%%%%%%%%%%%%%%%%%%%%%%%%%%%%%%%%%%%%%%%
%%%%%%%%%%%%%%%%%%%%%%%%%%%%%%%%%%%%%%%%%%%%%%%%%%%%%%%%%%%%%%%%%%%%%%%%%%%%%
\subsection{Distributionally robust stochastic MPC with nonlinear constraints}
In this example, we highlight a tube-based MPC problem with linear dynamics but nonlinear non-convex constraints.
For a detailed explanation of the application of distributionally robust CC to tube-based MPC, we refer to \cite{nemmourApproximateDistributionallyRobust2021}.
We consider the problem of controlling a double-integrator system $f_{dyn}(x, u)$ with additive noise subject to a constraint given in the form of a non-convex SVM classifier.
The optimal control problem (OCP) with horizon $H$ and quadratic cost $J(\mathbf x, \mathbf u)$ is subject to the constraints
\begin{equation*}
    x^l_{k+1} = f_{dyn}(x^l_k, u^l_k),
 \sup_{\text{MMD}(P, \hat{P})\leq \varepsilon}\operatorname{CVaR}_{1-\alpha}^{P}[h(x^l_k)] \leq 0,
\end{equation*}
where $l$ denotes the current iteration in the MPC-loop, $\mathbf{u}^l = [u^l_0, \dots, u^l_{H-1}]$ the actions, $\mathbf{x}^l = [x^l_0, \dots, x^l_{H-1}]$ the states, and $k=0, \dots, H-1$. 
We solve the OCP using MMD-DRCCP with bootstrap ambiguity sets and visualize the resulting closed-loop trajectories with high constraint satisfaction in Figure~\ref{fig:drcc_mpc}. Note that related methods for MPC with Wasserstein ambiguity sets \cite{zhongDatadrivenDistributionallyRobust2021} are restricted to affine constraints and thus not applicable to this problem, which highlights the greater generality of our approach.
\begin{figure}[t!]
    \centering
    \includegraphics[scale=0.85]{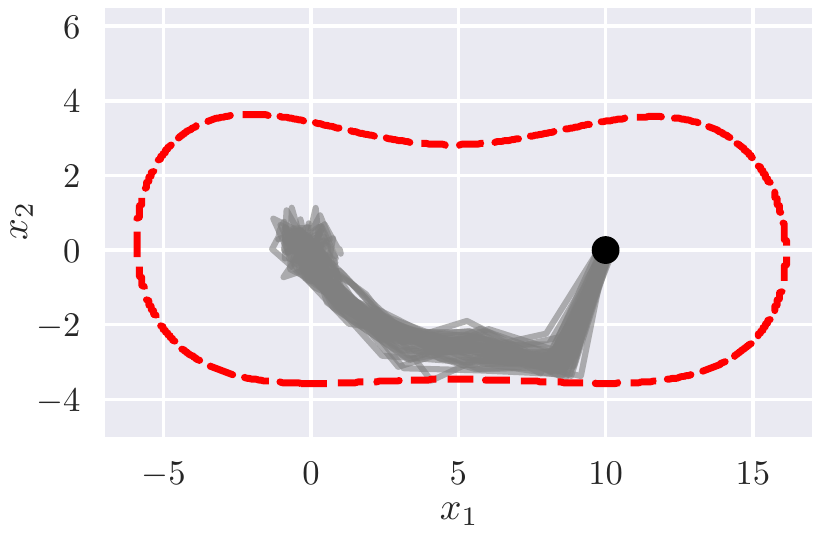}
    \caption{Tube-based MPC with 40 samples of the additive disturbance $\mathcal N(0, 0.2)$. We add small uniform noise on the initial state $(10, 0)$ and plot the 30 different resulting trajectories in grey, visualizing the dynamics tube resulting from the MMD-DRCCP.
    }
    \label{fig:drcc_mpc}
\end{figure}
\section{Further Related Work}
\label{sec:rel_work}
DRCCPs have recently attracted significant attention in the stochastic programming and control community \cite{coulson2021distributionally, zhongDatadrivenDistributionallyRobust2021}. A significant focus has been laid on Wasserstein ambiguity sets \cite{Weijun2018, chenDataDrivenChanceConstrained, hotaDataDrivenChanceConstrained2018, jiDataDrivenDistributionallyRobust, ho-nguyenDistributionallyRobustChanceConstrained2020}, for which the strong duality result of \cite{gaoDistributionallyRobustStochastic2016} plays a fundamental role.
As chance constraints are generally non-convex even for convex constraints \cite{nemirovskiConvexApproximationsChance}, a common approach to the problem relies on approximating the chance constraint via the conditional Value-at-Risk (CVaR) \cite{TyrrellRockafellar}, which has been shown to provide the tightest conservative convex approximation \cite{nemirovskiConvexApproximationsChance}.
As the dual formulation of the Wasserstein DRCCP contains a constraint involving a supremum over the uncertain variable, many works restrict their scope to constraint functions affine in the uncertain variable for which the supremum can be carried out in closed form. A notable exception is given by \cite{hotaDataDrivenChanceConstrained2018}, which only assumes the constraints to be concave in the uncertainty and proposes solving the resulting semi-infinite program with a cutting-plane algorithm to compute an approximate solution. 
Compared to probability metrics such as the Wasserstein distance, $\phi$-divergences require absolute continuity of the two considered distributions with respect to each other, thus limiting the generality of these approaches.
Nonetheless, we refer to \cite{ben-talRobustSolutionsOptimization2013,namkoongVariancebasedRegularizationConvex,lamRecoveringBestStatistical2016,jiangDatadrivenChanceConstrained2016} for other technical details for $\phi$-divergence-DRO.
Statistical guarantees for Wasserstein DRO \cite{MohajerinEsfahani2018} provide a basis for consistency results in \cite{cherukuriConsistencyDistributionallyRobust2020}. Those authors show that the DRCCP with Wasserstein ambiguity set converges to the CCP from above as the samples size goes to infinity.
However, unlike the finite-sample analysis in this paper, those consistency results do not provide error bounds for solutions computed using finitely many samples, which is the key to certifying robustness.
They also assume access to the optimal solution to the original program, which may not be available depending on the constraint function $f$.
In the context of CCPs, kernel methods have been used previously to estimate the unknown distribution over the uncertainty variables via kernel density estimation to obtain a deterministic problem \cite{caillauSolvingChanceConstrained2018,keilBiasedKernelDensity2020}.
A few works in stochastic control \cite{thorpeDataDrivenChanceConstrained2022,zhuWorstCaseRiskQuantification2020,zhuKernelMeanEmbedding2020,gopalakrishnanSolvingChanceConstrainedOptimization2021} have considered mean embeddings of the chance constraints to obtain approximations.
None of those works contains algorithms that can solve MMD-constrained DRCCP or provide finite-sample guarantees like in this paper.
To the best of our knowledge, this work is the only work to solve DRCCP with MMD ambiguity sets in a principled way, thus utilizing the unique advantage of MMD over Wasserstein-based approaches, e.g., bootstrap MMD ambiguity sets, approximating nonlinear constraints, and favorable finite-sample guarantees. 

\section{Conclusion}
\label{sec:conclusion}
In this work, we presented distributionally robust chance-constrained optimization with MMD ambiguity sets. 
Leveraging recent results in kernel methods for robust machine learning, we provided a practical algorithm for 
distributionally robust conditional Value-at-Risk constraints with finite-sample constraint satisfaction guarantees.
Different from methods based on Wasserstein ambiguity sets, we give a practical bootstrap scheme that enables a priori computation of suitably sized ambiguity sets. Moreover, our method can be applied to general nonlinear constraint functions and thus parts with the strong assumptions of other recently proposed frameworks.
For future work, we plan to derive sharper finite-sample guarantees for constraint satisfaction and optimality and explore more applications to robust nonlinear control problems.

\section{Acknowledgement}
We thank Ren\'e Henrion for his helpful feedback.

\printbibliography

@article{ben-talRobustSolutionsOptimization2013,
  title = {Robust {{Solutions}} of {{Optimization Problems Affected}} by {{Uncertain Probabilities}}},
  author = {Ben-Tal, Aharon and den Hertog, Dick and De Waegenaere, Anja and Melenberg, Bertrand and Rennen, Gijs},
  options = {useprefix=true},
  date = {2013-02},
  journaltitle = {Management Science},
  shortjournal = {Management Science},
  volume = {59},
  number = {2},
  pages = {341--357},
  issn = {0025-1909, 1526-5501},
  doi = {10.1287/mnsc.1120.1641},
  url = {http://pubsonline.informs.org/doi/abs/10.1287/mnsc.1120.1641},
  urldate = {2022-03-02},
  langid = {english},
  file = {/Users/zhu/Zotero/storage/6VKZ4X2U/Ben-Tal et al. - 2013 - Robust Solutions of Optimization Problems Affected.pdf}
}

@book{berlinetReproducingKernelHilbert2011,
  title = {Reproducing Kernel {{Hilbert}} Spaces in Probability and Statistics},
  author = {Berlinet, Alain and Thomas-Agnan, Christine},
  date = {2011},
  publisher = {{Springer Science \& Business Media}},
  file = {/Users/zhu/Zotero/storage/M22YNEIK/books.html}
}

@article{delageDistributionallyRobustOptimization2010,
  title = {Distributionally {{Robust Optimization Under Moment Uncertainty}} with {{Application}} to {{Data-Driven Problems}}},
  author = {Delage, Erick and Ye, Yinyu},
  date = {2010},
  journaltitle = {Operations Research},
  volume = {58},
  number = {3},
  eprint = {40792682},
  eprinttype = {jstor},
  pages = {595--612},
  publisher = {{INFORMS}},
  issn = {0030-364X},
  abstract = {Stochastic programming can effectively describe many decision-making problems in uncertain environments. Unfortunately, such programs are often computationally demanding to solve. In addition, their solution can be misleading when there is ambiguity in the choice of a distribution for the random parameters. In this paper, we propose a model that describes uncertainty in both the distribution form (discrete, Gaussian, exponential, etc.) and moments (mean and covariance matrix). We demonstrate that for a wide range of cost functions the associated distributionally robust (or min-max) stochastic program can be solved efficiently. Furthermore, by deriving a new confidence region for the mean and the covariance matrix of a random vector, we provide probabilistic arguments for using our model in problems that rely heavily on historical data. These arguments are confirmed in a practical example of portfolio selection, where our framework leads to better-performing policies on the "true" distribution underlying the daily returns of financial assets.}
}

@inproceedings{feydyInterpolatingOptimalTransport2018,
  title={Interpolating between optimal transport and mmd using sinkhorn divergences},
  author={Feydy, Jean and S{\'e}journ{\'e}, Thibault and Vialard, Fran{\c{c}}ois-Xavier and Amari, Shun-ichi and Trouv{\'e}, Alain and Peyr{\'e}, Gabriel},
  booktitle={The 22nd International Conference on Artificial Intelligence and Statistics},
  pages={2681--2690},
  year={2019},
  organization={PMLR}
}

@article{lamComplexityFreeGeneralizationDistributionally2021,
  title={Complexity-Free Generalization via Distributionally Robust Optimization},
  author={Lam, Henry and Zeng, Yibo},
  journal={arXiv preprint arXiv:2106.11180},
  year={2021}
}

@article{namkoongVariancebasedRegularizationConvex,
  title={Variance-based regularization with convex objectives},
  author={Duchi, John and Namkoong, Hongseok},
  journal={The Journal of Machine Learning Research},
  volume={20},
  number={1},
  pages={2450--2504},
  year={2019},
  publisher={JMLR. org}
}

@article{roysetRateConvergenceAnalysis2012,
  title = {Rate of {{Convergence Analysis}} of {{Discretization}} and {{Smoothing Algorithms}} for {{Semiinfinite Minimax Problems}}},
  author = {Royset, J. O. and Pee, E. Y.},
  date = {2012-12},
  journaltitle = {Journal of Optimization Theory and Applications},
  shortjournal = {J Optim Theory Appl},
  volume = {155},
  number = {3},
  pages = {855--882},
  issn = {0022-3239, 1573-2878},
  doi = {10.1007/s10957-012-0109-3},
  url = {http://link.springer.com/10.1007/s10957-012-0109-3},
  urldate = {2022-03-03},
  abstract = {Discretization algorithms for semiinfinite minimax problems replace the original problem, containing an infinite number of functions, by an approximation involving a finite number, and then solve the resulting approximate problem. The approximation gives rise to a discretization error, and suboptimal solution of the approximate problem gives rise to an optimization error. Accounting for both discretization and optimization errors, we determine the rate of convergence of discretization algorithms, as a computing budget tends to infinity. We find that the rate of convergence depends on the class of optimization algorithms used to solve the approximate problem as well as the policy for selecting discretization level and number of optimization iterations. We construct optimal policies that achieve the best possible rate of convergence and find that, under certain circumstances, the better rate is obtained by inexpensive gradient methods.},
  langid = {english},
  file = {/Users/zhu/Zotero/storage/VHDJ7G53/Royset and Pee - 2012 - Rate of Convergence Analysis of Discretization and.pdf}
}

@article{santambrogioOptimalTransportApplied2015,
  title = {Optimal Transport for Applied Mathematicians},
  author = {Santambrogio, Filippo},
  date = {2015},
  journaltitle = {Birkäuser, NY},
  volume = {55},
  number = {58-63},
  pages = {94},
  publisher = {{Springer}},
  file = {/Users/zhu/Documents/literature/zotero_sync/Santambrogio_2015_Optimal transport for applied mathematicians.pdf;/Users/zhu/Zotero/storage/8YEXLDAV/978-3-319-20828-2.html}
}

@book{steinwartSupportVectorMachines2008,
  title = {Support Vector Machines},
  author = {Steinwart, Ingo and Christmann, Andreas},
  date = {2008},
  publisher = {{Springer Science \& Business Media}},
  file = {/Users/zhu/Documents/literature/zotero_sync/Steinwart_Christmann_2008_Support vector machines.pdf;/Users/zhu/Zotero/storage/3BVGCMAG/books.html}
}

@incollection{vandervaartWeakConvergence1996,
  title = {Weak {{Convergence}}},
  booktitle = {Weak {{Convergence}} and {{Empirical Processes}}: {{With Applications}} to {{Statistics}}},
  author = {van der Vaart, Aad W. and Wellner, Jon A.},
  editor = {van der Vaart, Aad W. and Wellner, Jon A.},
  options = {useprefix=true},
  date = {1996},
  pages = {16--28},
  publisher = {{Springer}},
  location = {{New York, NY}},
  doi = {10.1007/978-1-4757-2545-2_3},
  url = {https://doi.org/10.1007/978-1-4757-2545-2_3},
  urldate = {2022-03-29},
  abstract = {In this section D and E are metric spaces with metrics d and e, respectively. The set of all continuous, bounded functions f: D ↦ ℝ is denoted Cb(D).},
  isbn = {978-1-4757-2545-2},
  langid = {english}
}

@book{wendlandScatteredDataApproximation2004,
  title = {Scattered {{Data Approximation}}},
  author = {Wendland, Holger},
  date = {2004-12-13},
  eprint = {qy4cbWUmSyYC},
  eprinttype = {googlebooks},
  publisher = {{Cambridge University Press}},
  abstract = {Many practical applications require the reconstruction of a multivariate function from discrete, unstructured data. This book gives a self-contained, complete introduction into this subject. It concentrates on truly meshless methods such as radial basis functions, moving least squares, and partitions of unity. The book starts with an overview on typical applications of scattered data approximation, coming from surface reconstruction, fluid-structure interaction, and the numerical solution of partial differential equations. It then leads the reader from basic properties to the current state of research, addressing all important issues, such as existence, uniqueness, approximation properties, numerical stability, and efficient implementation. Each chapter ends with a section giving information on the historical background and hints for further reading. Complete proofs are included, making this perfectly suited for graduate courses on multivariate approximation and it can be used to support courses in computer-aided geometric design, and meshless methods for partial differential equations.},
  isbn = {978-1-139-45665-4},
  langid = {english},
  pagetotal = {346},
  keywords = {Mathematics / Mathematical Analysis,Mathematics / Numerical Analysis}
}

@inproceedings{zhuKernelMeanEmbedding2020,
  title = {A {{Kernel Mean Embedding Approach}} to {{Reducing Conservativeness}} in {{Stochastic Programming}} and {{Control}}},
  booktitle = {Proceedings of the 2nd {{Conference}} on {{Learning}} for {{Dynamics}} and {{Control}}},
  author = {Zhu, Jia-Jie and Schoelkopf, Bernhard and Diehl, Moritz},
  date = {2020-07-31},
  pages = {915--923},
  publisher = {{PMLR}},
  issn = {2640-3498},
  url = {https://proceedings.mlr.press/v120/zhu20a.html},
  urldate = {2022-04-01},
  abstract = {In this paper, we apply kernel mean embedding methods to sample-based stochastic optimization and control. Specifically, we use the reduced-set expansion method as a way to discard sampled scenarios. The effect of such constraint removal is improved optimality and decreased conservativeness. This is achieved by solving a distributional-distance-regularized optimization problem. We demonstrated this optimization formulation is well-motivated in theory, computationally tractable, and effective in numerical algorithms.},
  eventtitle = {Learning for {{Dynamics}} and {{Control}}},
  langid = {english},
  file = {/Users/zhu/Documents/literature/zotero_sync/Zhu et al_2020_A Kernel Mean Embedding Approach to Reducing Conservativeness in Stochastic5.pdf}
}

@article{coulson2021distributionally,
  title={Distributionally robust chance constrained data-enabled predictive control},
  author={Coulson, Jeremy and Lygeros, John and Dorfler, Florian},
  journal={IEEE Transactions on Automatic Control},
  year={2021},
  publisher={IEEE}
}

@article{caillauSolvingChanceConstrained2018,
  title = {Solving Chance Constrained Optimal Control Problems in Aerospace via Kernel Density Estimation},
  author = {Caillau, J.-B. and Cerf, M. and Sassi, A. and Tr{\'e}lat, E. and Zidani, H.},
  year = {2018},
  month = sep,
  journal = {Optimal Control Applications and Methods},
  volume = {39},
  number = {5},
  pages = {1833--1858},
  issn = {0143-2087, 1099-1514},
  doi = {10.1002/oca.2445},
  abstract = {The goal of this paper is to show how non-parametric statistics can be used to solve some chance constrained optimization and optimal control problems. We use the Kernel Density Estimation method to approximate the probability density function of a random variable with unknown distribution, from a relatively small sample. We then show how this technique can be applied and implemented for a class of problems including the Goddard problem and the trajectory optimization of an Ariane 5-like launcher.},
  langid = {english},
  file = {/home/ynemmour/Zotero/storage/C8I7R9BF/Caillau et al. - 2018 - Solving chance constrained optimal control problem.pdf}
}

@article{chenDataDrivenChanceConstrained,
  title={Data-driven chance constrained programs over Wasserstein balls},
  author={Chen, Zhi and Kuhn, Daniel and Wiesemann, Wolfram},
  journal={arXiv preprint arXiv:1809.00210},
  year={2018}
}

@article{cherukuriConsistencyDistributionallyRobust2020,
    author={Cherukuri, Ashish and Hota, Ashish R.},
  journal={IEEE Control Systems Letters}, 
  title={Consistency of Distributionally Robust Risk- and Chance-Constrained Optimization Under Wasserstein Ambiguity Sets}, 
  year={2021},
  volume={5},
  number={5},
  pages={1729-1734},
  doi={10.1109/LCSYS.2020.3043228}
}

@article{diamondCVXPYPythonEmbeddedModeling2016,
  author = {Diamond, Steven and Boyd, Stephen},
title = {CVXPY: A Python-Embedded Modeling Language for Convex Optimization},
year = {2016},
issue_date = {January 2016},
publisher = {JMLR.org},
volume = {17},
number = {1},
issn = {1532-4435},
abstract = {CVXPY is a domain-specific language for convex optimization embedded in Python. It allows the user to express convex optimization problems in a natural syntax that follows the math, rather than in the restrictive standard form required by solvers. CVXPY makes it easy to combine convex optimization with high-level features of Python such as parallelism and object-oriented design. CVXPY is available at http://www.cvxpy.org/ under the GPL license, along with documentation and examples.},
journal = {J. Mach. Learn. Res.},
month = {1},
pages = {2909–2913},
numpages = {5},
keywords = {convexity verification, Python, conic programming, domain-specific languages, convex optimization}
}

@article{gopalakrishnanSolvingChanceConstrainedOptimization2021,
  title = {Solving {{Chance-Constrained Optimization Under Nonparametric Uncertainty Through Hilbert Space Embedding}}},
  author = {Gopalakrishnan, Bharath and Singh, Arun Kumar and Krishna, K. Madhava and Manocha, Dinesh},
  year = {2021},
  journal = {IEEE Transactions on Control Systems Technology},
  pages = {1--16},
  issn = {1063-6536, 1558-0865, 2374-0159},
  doi = {10.1109/TCST.2021.3091315},
  keywords = {Computer Science - Robotics,Mathematics - Optimization and Control},
  file = {/home/ynemmour/Zotero/storage/RVEM45L3/Gopalakrishnan et al. - 2021 - Solving Chance-Constrained Optimization Under Nonp.pdf;/home/ynemmour/Zotero/storage/VZGDFKH8/Gopalakrishnan et al. - 2018 - Solving Chance Constrained Optimization under Non-.pdf;/home/ynemmour/Zotero/storage/GEHQ2H6E/1811.html}
}

@article{guDistributionallyRobustChanceConstrained2021,
  title = {Distributionally {{Robust Chance-Constrained Programmings}} for {{Non-Linear Uncertainties}} with {{Wasserstein Distance}}},
  author = {Gu, Yining and Wang, Yanjun},
  year = {2021},
  month = nov,
  journal = {arXiv:2103.04790 [math]},
  eprint = {2103.04790},
  eprinttype = {arxiv},
  primaryclass = {math},
  abstract = {In this paper, we develop an exact reformulation and a deterministic approximation for distributionally robust chance-constrained programmings (DRCCPs) with convex non-linear uncertain constraints under data-driven Wasserstein ambiguity sets. It is known that robust chance constraints can be conservatively approximated by worst-case conditional value-at-risk (CVaR) constraints. It is shown that this approximation can be reformulated as an optimization problem involving biconvex constraints for joint DRCCP. We also demonstrate that this approximation amounts to a convex programming by constructing new decision variables, which allows us to eliminate biconvex terms. Meanwhile, we present how to transform many kinds of non-linear uncertain constraints into cone inequalities. Besides, it turns out that this approximation becomes essentially exact under certain conditions for joint DRCCP. Specifically, when the decision variables are binary and the uncertain constraints are linear, then this approximation is equivalent to a tractable mixed-integer convex reformulation. Numerical results illustrate that the proposed formulations can be solved efficiently.},
  archiveprefix = {arXiv},
  keywords = {Mathematics - Optimization and Control},
  file = {/home/ynemmour/Zotero/storage/JHE9GG7R/Gu and Wang - 2021 - Distributionally Robust Chance-Constrained Program.pdf;/home/ynemmour/Zotero/storage/4M53DX3S/2103.html}
}

@article{ho-nguyenDistributionallyRobustChanceConstrained2020,
  title = {Distributionally {{Robust Chance-Constrained Programs}} with {{Right-Hand Side Uncertainty}} under {{Wasserstein Ambiguity}}},
  author = {{Ho-Nguyen}, Nam and {K{\i}l{\i}n{\c c}-Karzan}, Fatma and K{\"u}{\c c}{\"u}kyavuz, Simge and Lee, Dabeen},
  year = {2020},
  month = dec,
  journal = {arXiv:2003.12685 [math]},
  eprint = {2003.12685},
  eprinttype = {arxiv},
  primaryclass = {math},
  abstract = {We consider exact deterministic mixed-integer programming (MIP) reformulations of distributionally robust chance-constrained programs (DR-CCP) with random right-hand sides over Wasserstein ambiguity sets. The existing MIP formulations are known to have weak continuous relaxation bounds, and, consequently, for hard instances with small radius, or with large problem sizes, the branch-and-bound based solution processes suffer from large optimality gaps even after hours of computation time. This significantly hinders the practical application of the DR-CCP paradigm. Motivated by these challenges, we conduct a polyhedral study to strengthen these formulations. We reveal several hidden connections between DR-CCP and its nominal counterpart (the sample average approximation), mixing sets, and robust 0-1 programming. By exploiting these connections in combination, we provide an improved formulation and two classes of valid inequalities for DR-CCP. We test the impact of our results on a stochastic transportation problem numerically. Our experiments demonstrate the effectiveness of our approach; in particular our improved formulation and proposed valid inequalities reduce the overall solution times remarkably. Moreover, this allows us to significantly scale up the problem sizes that can be handled in such DR-CCP formulations by reducing the solution times from hours to seconds.},
  archiveprefix = {arXiv},
  keywords = {90C11; 90C15,Mathematics - Optimization and Control},
  file = {/home/ynemmour/Zotero/storage/5ZLTVFXX/Ho-Nguyen et al. - 2020 - Distributionally Robust Chance-Constrained Program.pdf;/home/ynemmour/Zotero/storage/3IQXJ2DM/2003.html}
}

@inproceedings{hotaDataDrivenChanceConstrained2018,
  title={Data-driven chance constrained optimization under Wasserstein ambiguity sets},
  author={Hota, Ashish R and Cherukuri, Ashish and Lygeros, John},
  booktitle={2019 American Control Conference (ACC)},
  pages={1501--1506},
  year={2019},
  organization={IEEE}
}

@article{jiangDatadrivenChanceConstrained2016,
  title = {Data-Driven Chance Constrained Stochastic Program},
  author = {Jiang, Ruiwei and Guan, Yongpei},
  year = {2016},
  month = jul,
  journal = {Mathematical Programming},
  volume = {158},
  number = {1-2},
  pages = {291--327},
  issn = {0025-5610, 1436-4646},
  doi = {10.1007/s10107-015-0929-7},
  abstract = {Chance constrained programming is an effective and convenient approach to control risk in decision making under uncertainty. However, due to unknown probability distributions of random parameters, the solution obtained from a chance constrained optimization problem can be biased. In addition, instead of knowing the true distributions of random parameters, in practice, only a series of historical data, which can be considered as samples taken from the true (while ambiguous) distribution, can be observed and stored. In this paper, we derive stochastic programs with data-driven chance constraints (DCCs) to tackle these problems and develop equivalent reformulations. For a given historical data set, we construct two types of confidence sets for the ambiguous distribution through nonparametric statistical estimation of its moments and density functions, depending on the amount of available data. We then formulate DCCs from the perspective of robust feasibility, by allowing the ambiguous distribution to run adversely within its confidence set. After deriving equivalent reformulations, we provide exact and approximate solution approaches for stochastic programs with DCCs under both momentbased and density-based confidence sets. In addition, we derive the relationship between the conservatism of DCCs and the sample size of historical data, which shows quantitatively what we call the value of data.},
  langid = {english},
  file = {/home/ynemmour/Zotero/storage/PT4HAL6R/Jiang and Guan - 2016 - Data-driven chance constrained stochastic program.pdf}
}

@article{jiDataDrivenDistributionallyRobust,
  title={Data-driven distributionally robust chance-constrained optimization with Wasserstein metric},
  author={Ji, Ran and Lejeune, Miguel A},
  journal={Journal of Global Optimization},
  volume={79},
  number={4},
  pages={779--811},
  year={2021},
  publisher={Springer}
}

@inproceedings{keilBiasedKernelDensity2020,
  title = {Biased {{Kernel Density Estimators}} for {{Chance Constrained Optimal Control Problems}}},
  booktitle = {2020 {{American Control Conference}} ({{ACC}})},
  author = {Keil, Rachel E. and Miller, Alexander and Kumar, Mrinal and Rao, Anil V.},
  year = {2020},
  month = jul,
  pages = {2820--2825},
  issn = {2378-5861},
  doi = {10.23919/ACC45564.2020.9148040},
  abstract = {A method is developed for transforming chance constrained optimization problems to a form numerically solvable. The transformation is accomplished by reformulating the chance constraints as nonlinear constraints using a method that combines the previously developed Split-Bernstein approximation and kernel density estimator (KDE) methods. The Split-Bernstein approximation in a particular form is a biased kernel density estimator. The bias of this kernel leads to a nonlinear approximation that does not violate the bounds of the original chance constraint. The method of applying biased KDEs to reformulate chance constraints as nonlinear constraints transforms the chance constrained optimization problem to a deterministic optimization problems that retains key properties of the chance constrained optimization problem and can be solved numerically. This method can be applied to chance constrained optimal control problems. As a result, the Split-Bernstein and Gaussian kernels are applied to a chance constrained optimal control problem and the results are compared.},
  keywords = {Bandwidth,Kernel,Optimal control,Optimization,Probability density function,Random variables,Transforms},
  file = {/home/ynemmour/Zotero/storage/WNRMGJSV/Keil et al. - 2020 - Biased Kernel Density Estimators for Chance Constr.pdf;/home/ynemmour/Zotero/storage/UQSXK8TK/9148040.html}
}

@article{lamRecoveringBestStatistical2016,
  title={Recovering Best Statistical Guarantees via the Empirical Divergence-Based Distributionally Robust Optimization},
  author={Lam, Henry},
  journal={Operations Research},
  volume={67},
  number={4},
  pages={1090--1105},
  year={2019},
  publisher={INFORMS}
}

@article{MohajerinEsfahani2018,
  title = {Data-Driven Distributionally Robust Optimization Using the {{Wasserstein}} Metric: Performance Guarantees and Tractable Reformulations},
  author = {Mohajerin Esfahani, Peyman and Kuhn, Daniel},
  year = {2018},
  month = sep,
  journal = {Mathematical Programming},
  volume = {171},
  number = {1-2},
  eprint = {1505.05116},
  eprinttype = {arxiv},
  pages = {115--166},
  publisher = {{Springer Verlag}},
  issn = {14364646},
  doi = {10.1007/s10107-017-1172-1},
  abstract = {We consider stochastic programs where the distribution of the uncertain parameters is only observable through a finite training dataset. Using the Wasserstein metric, we construct a ball in the space of (multivariate and non-discrete) probability distributions centered at the uniform distribution on the training samples, and we seek decisions that perform best in view of the worst-case distribution within this Wasserstein ball. The state-of-the-art methods for solving the resulting distributionally robust optimization problems rely on global optimization techniques, which quickly become computationally excruciating. In this paper we demonstrate that, under mild assumptions, the distributionally robust optimization problems over Wasserstein balls can in fact be reformulated as finite convex programs\textemdash in many interesting cases even as tractable linear programs. Leveraging recent measure concentration results, we also show that their solutions enjoy powerful finite-sample performance guarantees. Our theoretical results are exemplified in mean-risk portfolio optimization as well as uncertainty quantification.},
  archiveprefix = {arXiv},
  keywords = {Calculus of Variations and Optimal Control,Combinatorics,Mathematical and Computational Physics,Mathematical Methods in Physics,Mathematics of Computing,Numerical Analysis,Optimization,Theoretical},
  file = {/home/ynemmour/Zotero/storage/CAYYFSPR/Mohajerin Esfahani, Kuhn - 2018 - Data-driven distributionally robust optimization using the Wasserstein metric performance guarantees a.pdf}
}

@article{nemirovskiConvexApproximationsChance,
  title={Convex approximations of chance constrained programs},
  author={Nemirovski, Arkadi and Shapiro, Alexander},
  journal={SIAM Journal on Optimization},
  volume={17},
  number={4},
  pages={969--996},
  year={2007},
  publisher={SIAM}
}

@article{thorpeDataDrivenChanceConstrained2022,
  title = {Data-{{Driven Chance Constrained Control}} Using {{Kernel Distribution Embeddings}}},
  author = {Thorpe, Adam J. and Lew, Thomas and Oishi, Meeko M. K. and Pavone, Marco},
  year = {2022},
  month = feb,
  journal = {arXiv:2202.04193 [cs, eess, math]},
  eprint = {2202.04193},
  eprinttype = {arxiv},
  primaryclass = {cs, eess, math},
  abstract = {We present a data-driven algorithm for efficiently computing stochastic control policies for general joint chance constrained optimal control problems. Our approach leverages the theory of kernel distribution embeddings, which allows representing expectation operators as inner products in a reproducing kernel Hilbert space. This framework enables approximately reformulating the original problem using a dataset of observed trajectories from the system without imposing prior assumptions on the parameterization of the system dynamics or the structure of the uncertainty. By optimizing over a finite subset of stochastic open-loop control trajectories, we relax the original problem to a linear program over the control parameters that can be efficiently solved using standard convex optimization techniques. We demonstrate our proposed approach in simulation on a system with nonlinear non-Markovian dynamics navigating in a cluttered environment.},
  archiveprefix = {arXiv},
  keywords = {Computer Science - Robotics,Electrical Engineering and Systems Science - Systems and Control,Mathematics - Optimization and Control},
  file = {/home/ynemmour/Zotero/storage/XEN8PA4A/Thorpe et al. - 2022 - Data-Driven Chance Constrained Control using Kerne.pdf;/home/ynemmour/Zotero/storage/P25NR5KW/2202.html}
}

@article{Weijun2018,
  title={On distributionally robust chance constrained programs with Wasserstein distance},
  author={Xie, Weijun},
  journal={Mathematical Programming},
  volume={186},
  number={1},
  pages={115--155},
  year={2021},
  publisher={Springer}
}

@article{zhongDatadrivenDistributionallyRobust2021,
  title = {Data-Driven Distributionally Robust {{MPC}} Using the {{Wasserstein}} Metric},
  author = {Zhong, Zhengang and {del Rio-Chanona}, Ehecatl Antonio and Petsagkourakis, Panagiotis},
  year = {2021},
  month = may,
  journal = {arXiv:2105.08414 [cs, eess, math]},
  eprint = {2105.08414},
  eprinttype = {arxiv},
  primaryclass = {cs, eess, math},
  abstract = {A data-driven MPC scheme is proposed to safely control constrained stochastic linear systems using distributionally robust optimization. Distributionally robust constraints based on the Wasserstein metric are imposed to bound the state constraint violations in the presence of process disturbance. A feedback control law is solved to guarantee that the predicted states comply with constraints. The stochastic constraints are satisfied with regard to the worst-case distribution within the Wasserstein ball centered at their discrete empirical probability distribution. The resulting distributionally robust MPC framework is computationally tractable and efficient, as well as recursively feasible. The innovation of this approach is that all the information about the uncertainty can be determined empirically from the data. The effectiveness of the proposed scheme is demonstrated through numerical case studies.},
  archiveprefix = {arXiv},
  keywords = {Electrical Engineering and Systems Science - Systems and Control,Mathematics - Optimization and Control},
  file = {/home/ynemmour/Zotero/storage/PLT6S75U/Zhong et al. - 2021 - Data-driven distributionally robust MPC using the .pdf;/home/ynemmour/Zotero/storage/HJVZDYKA/2105.html}
}

@inproceedings{zhuKernelDistributionallyRobust2020,
  title={Kernel distributionally robust optimization: Generalized duality theorem and stochastic approximation},
  author={Zhu, Jia-Jie and Jitkrittum, Wittawat and Diehl, Moritz and Sch{\"o}lkopf, Bernhard},
  booktitle={International Conference on Artificial Intelligence and Statistics},
  pages={280--288},
  year={2021},
  organization={PMLR}
}

@article{bootstrap-CI,
 ISSN = {00905364},
 URL = {http://www.jstor.org/stable/2241977},
 abstract = {Bootstrap distributional limit theorems for U and V statistics are proved. They hold a.s., under weak moment conditions and without restrictions on the bootstrap sample size (as long as it tends to ∞), regardless of the degree of degeneracy of U and V. A testing procedure based on these results is outlined.},
 author = {Miguel A. Arcones and Evarist Gine},
 journal = {The Annals of Statistics},
 number = {2},
 pages = {655--674},
 publisher = {Institute of Mathematical Statistics},
 title = {On the Bootstrap of U and V Statistics},
 volume = {20},
 year = {1992}
}

@book{scholkopf2002learning,
  title={Learning with kernels: support vector machines, regularization, optimization, and beyond},
  author={Sch{\"o}lkopf, Bernhard and Smola, Alexander J and Bach, Francis and others},
  year={2002},
  publisher={MIT press}
}

@inproceedings{smola2007hilbert,
  title={A Hilbert space embedding for distributions},
  author={Smola, Alex and Gretton, Arthur and Song, Le and Sch{\"o}lkopf, Bernhard},
  booktitle={International Conference on Algorithmic Learning Theory},
  pages={13--31},
  year={2007},
  organization={Springer}
}

@article{gaoDistributionallyRobustStochastic2016,
  title = {Distributionally {{Robust Stochastic Optimization}} with {{Wasserstein Distance}}},
  author = {Gao, Rui and Kleywegt, Anton J.},
  year = {2016},
  month = jul,
  journal = {arXiv:1604.02199 [math]},
  eprint = {1604.02199},
  eprinttype = {arxiv},
  primaryclass = {math},
  abstract = {Distributionally robust stochastic optimization (DRSO) is an approach to optimization under uncertainty in which, instead of assuming that there is an underlying probability distribution that is known exactly, one hedges against a chosen set of distributions. In this paper, we consider sets of distributions that are within a chosen Wasserstein distance from a nominal distribution. We argue that such a choice of sets has two advantages: (1) The resulting distributions hedged against are more reasonable than those resulting from other popular choices of sets, such as \{\textbackslash Phi\}-divergence ambiguity set. (2) The problem of determining the worst-case expectation has desirable tractability properties. We derive a dual reformulation of the corresponding DRSO problem and construct approximate worst-case distributions (or an exact worst-case distribution if it exists) explicitly via the first-order optimality conditions of the dual problem. Our contributions are five-fold. (i) We identify necessary and sufficient conditions for the existence of a worst-case distribution, which is naturally related to the growth rate of the objective function. (ii) We show that the worst-case distributions resulting from an appropriate Wasserstein distance have a concise structure and a clear interpretation. (iii) Using this structure, we show that data-driven DRSO problems can be approximated to any accuracy by robust optimization problems, and thereby many DRSO problems become tractable by using tools from robust optimization. (iv) To the best of our knowledge, our proof of strong duality is the first constructive proof for DRSO problems, and we show that the constructive proof technique is also useful in other contexts. (v) Our strong duality result holds in a very general setting, and we show that it can be applied to infinite dimensional process control problems and worst-case value-at-risk analysis.},
  archiveprefix = {arXiv},
  keywords = {90C15; 90C46,Mathematics - Optimization and Control},
  file = {/home/ynemmour/Zotero/storage/D5VKS3SN/Gao and Kleywegt - 2016 - Distributionally Robust Stochastic Optimization wi.pdf;/home/ynemmour/Zotero/storage/TDE2579G/1604.html}
}

@article{garreauLargeSampleAnalysis2018,
  title = {Large Sample Analysis of the Median Heuristic},
  author = {Garreau, Damien and Jitkrittum, Wittawat and Kanagawa, Motonobu},
  year = {2018},
  month = oct,
  journal = {arXiv:1707.07269 [math, stat]},
  eprint = {1707.07269},
  eprinttype = {arxiv},
  primaryclass = {math, stat},
  abstract = {In kernel methods, the median heuristic has been widely used as a way of setting the bandwidth of RBF kernels. While its empirical performances make it a safe choice under many circumstances, there is little theoretical understanding of why this is the case. Our aim in this paper is to advance our understanding of the median heuristic by focusing on the setting of kernel two-sample test. We collect new findings that may be of interest for both theoreticians and practitioners. In theory, we provide a convergence analysis that shows the asymptotic normality of the bandwidth chosen by the median heuristic in the setting of kernel two-sample test. Systematic empirical investigations are also conducted in simple settings, comparing the performances based on the bandwidths chosen by the median heuristic and those by the maximization of test power.},
  archiveprefix = {arXiv},
  keywords = {62E20; 62G30,Mathematics - Statistics Theory,Statistics - Machine Learning},
  file = {/home/ynemmour/Zotero/storage/T8JIYMR2/Garreau et al. - 2018 - Large sample analysis of the median heuristic.pdf;/home/ynemmour/Zotero/storage/CB6WABNW/1707.html}
}

@article{grettonKernelTwoSampleTest2012,
  title = {A {{Kernel Two-Sample Test}}},
  author = {Gretton, Arthur and Borgwardt, Karsten M and Rasch, Malte J and Smola, Alexander and Sch{\"o}lkopf, Bernhard and Smola, Alexander},
  year = {2012},
  journal = {Journal of Machine Learning Research},
  volume = {13},
  pages = {723--773},
  abstract = {We propose a framework for analyzing and comparing distributions, which we use to construct statistical tests to determine if two samples are drawn from different distributions. Our test statistic is the largest difference in expectations over functions in the unit ball of a reproducing kernel Hilbert space (RKHS), and is called the maximum mean discrepancy (MMD). We present two distribution-free tests based on large deviation bounds for the MMD, and a third test based on the asymptotic distribution of this statistic. The MMD can be computed in quadratic time, although efficient linear time approximations are available. Our statistic is an instance of an integral probability metric, and various classical metrics on distributions are obtained when alternative function classes are used in place of an RKHS. We apply our two-sample tests to a variety of problems, including attribute matching for databases using the Hungarian marriage method, where they perform strongly. Excellent performance is also obtained when comparing distributions over graphs, for which these are the first such tests. * .},
  keywords = {hypothesis testing,integral probability metric,kernel methods,schema matching,two-sample test,uniform convergence bounds},
  file = {/home/ynemmour/Zotero/storage/V3NIECLT/Gretton et al. - 2012 - A Kernel Two-Sample Test Bernhard Schölkopf.pdf}
}

@inproceedings{nemmourApproximateDistributionallyRobust2021,
  title = {Approximate {{Distributionally Robust Nonlinear Optimization}} with {{Application}} to {{Model Predictive Control}}: {{A Functional Approach}}},
  shorttitle = {Approximate {{Distributionally Robust Nonlinear Optimization}} with {{Application}} to {{Model Predictive Control}}},
  booktitle = {Learning for {{Dynamics}} and {{Control}}},
  author = {Nemmour, Yassine and Sch{\"o}lkopf, Bernhard and Zhu, Jia-Jie},
  year = {2021},
  month = may,
  pages = {1255--1269},
  publisher = {{PMLR}},
  issn = {2640-3498},
  copyright = {All rights reserved},
  langid = {english},
  file = {/home/ynemmour/Zotero/storage/MQKIH7HN/Nemmour et al. - 2021 - Approximate Distributionally Robust Nonlinear Opti.pdf}
}

@article{shapiroMinimaxAnalysisStochastic2002,
  title = {Minimax Analysis of Stochastic Problems},
  author = {Shapiro, Alexander and Kleywegt, Anton},
  year = {2002},
  month = jan,
  journal = {Optimization Methods and Software},
  volume = {17},
  number = {3},
  pages = {523--542},
  issn = {1055-6788, 1029-4937},
  doi = {10.1080/1055678021000034008},
  langid = {english},
  file = {/home/ynemmour/Zotero/storage/FFL8DMKC/Shapiro and Kleywegt - 2002 - Minimax analysis of stochastic problems.pdf}
}

@article{Tolstikhin2017,
  title = {Minimax {{Estimation}} of {{Kernel Mean Embeddings}}},
  author = {Tolstikhin, Ilya and Sriperumbudur, Bharath K and Muandet, Krikamol},
  year = {2017},
  journal = {Journal of Machine Learning Research},
  volume = {18},
  pages = {1--47},
  abstract = {In this paper, we study the minimax estimation of the Bochner integral \textmu{} k (P) := X k({$\cdot$}, x) dP (x), also called as the kernel mean embedding, based on random samples drawn i.i.d. from P , where k : X \texttimes{} X \textrightarrow{} R is a positive definite kernel. Various estimators (including the empirical estimator), \textasciicircum{} \texttheta{} n of \textmu{} k (P) are studied in the literature wherein all of them satisfy\textasciicircum\texttheta{} satisfy\textasciicircum{} satisfy\textasciicircum\texttheta{} n - \textmu{} k (P) H k = O P (n -1/2) with H k being the reproducing kernel Hilbert space induced by k. The main contribution of the paper is in showing that the above mentioned rate of n -1/2 is minimax in {$\cdot$} {$\cdot$} H k and {$\cdot$} {$\cdot$} L 2 (R d)-norms over the class of discrete measures and the class of measures that has an infinitely differentiable density, with k being a continuous translation-invariant kernel on R d. The interesting aspect of this result is that the minimax rate is independent of the smoothness of the kernel and the density of P (if it exists).},
  keywords = {Bochner integral,Bochner's theorem,kernel mean embeddings,minimax lower bounds,reproducing kernel Hilbert space,translation invariant kernel},
  file = {/home/ynemmour/Zotero/storage/7R8SV8HY/Tolstikhin, Sriperumbudur, Muandet - 2017 - Minimax Estimation of Kernel Mean Embeddings.pdf}
}

@article{TyrrellRockafellar,
  title = {Optimization of Conditional Value-at-Risk},
  author = {Rockafellar, R. Tyrrell and Uryasev, Stanislav},
  year = {2000},
  journal = {The Journal of Risk},
  volume = {2},
  number = {3},
  pages = {21--41},
  issn = {14651211},
  doi = {10.21314/jor.2000.038},
  abstract = {A new approach to optimizing or hedging a portfolio of financial instruments to reduce risk is presented and tested on applications. It focuses on minimizing conditional value-at-risk (CVaR) rather than minimizing value-at-risk (VaR), but portfolios with low CVaR necessarily have low VaR as well. CVaR, also called mean excess loss, mean shortfall, or tail VaR, is in any case considered to be a more consistent measure of risk than VaR. Central to the new approach is a technique for portfolio optimization which calculates VaR and optimizes CVaR simultaneously. This technique is suitable for use by investment companies, brokerage firms, mutual funds, and any business that evaluates risk. It can be combined with analytical or scenario-based methods to optimize portfolios with large numbers of instruments, in which case the calculations often come down to linear programming or nonsmooth programming. The methodology can also be applied to the optimization of percentiles in contexts outside of finance.},
  file = {/home/ynemmour/Zotero/storage/473YZ7SP/Rockafellar, Uryasev - 2000 - Optimization of conditional value-at-risk.pdf}
}

@article{zhuAdversariallyRobustKernel2021,
  title = {Adversarially {{Robust Kernel Smoothing}}},
  author = {Zhu, Jia-Jie and Kouridi, Christina and Nemmour, Yassine and Sch{\"o}lkopf, Bernhard},
  year = {2021},
  month = jul,
  journal = {arXiv:2102.08474 [cs, math, stat]},
  eprint = {2102.08474},
  eprinttype = {arxiv},
  primaryclass = {cs, math, stat},
  abstract = {We propose the adversarially robust kernel smoothing (ARKS) algorithm, combining kernel smoothing, robust optimization, and adversarial training for robust learning. Our methods are motivated by the convex analysis perspective of distributionally robust optimization based on probability metrics, such as the Wasserstein distance and the maximum mean discrepancy. We adapt the integral operator using supremal convolution in convex analysis to form a novel function majorant used for enforcing robustness. Our method is simple in form and applies to general loss functions and machine learning models. Furthermore, we report experiments with general machine learning models, such as deep neural networks, to demonstrate that ARKS performs competitively with the state-of-the-art methods based on the Wasserstein distance.},
  archiveprefix = {arXiv},
  copyright = {All rights reserved},
  keywords = {Computer Science - Machine Learning,Mathematics - Optimization and Control,Statistics - Machine Learning},
  file = {/home/ynemmour/Zotero/storage/XWEGSGB6/Zhu et al. - 2021 - Adversarially Robust Kernel Smoothing.pdf;/home/ynemmour/Zotero/storage/43XMQM4S/2102.html}
}

@inproceedings{zhuWorstCaseRiskQuantification2020,
  title={Worst-case risk quantification under distributional ambiguity using kernel mean embedding in moment problem},
  author={Zhu, Jia-Jie and Jitkrittum, Wittawat and Diehl, Moritz and Sch{\"o}lkopf, Bernhard},
  booktitle={2020 59th IEEE Conference on Decision and Control (CDC)},
  pages={3457--3463},
  year={2020},
  organization={IEEE}
}

\end{document}